\newcommand{\bsm}{\left[ \begin{smallmatrix}}
\newcommand{\esm}{\end{smallmatrix} \right]}
\newcommand{\imono}[1]{ \;\xymatrix{  \ar@{>->}^{#1}[r] &  \\} }
\newcommand{\iepi}[1]{ \;\xymatrix{  \ar@{->>}^{#1}[r] &  \\} }
\newcommand{\mono}{ \;\xymatrix{  \ar@{>->}[r] &  \\} }
\newcommand{\epi}{ \xymatrix{   \ar@{->>}[r] &  \\} }
\newcommand{\A}{\mathcal{A}}
\newcommand{\E}{\mathcal{E}}
\newcommand{\Hom}{Hom}
\theoremstyle{definition} 
\newtheorem{theorem}{Theorem}[section]
\newtheorem{proposition}[theorem]{Proposition}
\newtheorem{lemma}[theorem]{Lemma}
\newtheorem{cor}[theorem]{Corollary}
\newtheorem{remark}[theorem]{Remark}
\newtheorem{example}[theorem]{Example}
\newtheorem{definition}[theorem]{Definition}
\theoremstyle{definition}
\def\Hom{\mbox{Hom}}
\def\Im{\mbox{Im}\, }
\def\Coim{\mbox{Coim}\,}
\def\Ker{\mbox{Ker}\,}
\def\Coker{\mbox{Coker}}
\title{Admissible intersection and sum property}
\author{Souheila Hassoun and Sunny Roy}
\begin{document}
\date{}
\maketitle
\abstract{
We introduce subclasses of exact categories in terms of
admissible intersections or admissible sums or both at the same time. These categories are recently studied by Br\"ustle, Hassoun, Shah, Tattar and Wegner to give a characterisation of quasi-abelian categories in \cite{HSW} and a characterisation of abelian categories in \cite{BHT}.
We also generalise the Schur lemma to the context of exact categories.}

\bigskip

%%%%%%%%%%%%%%%%%%%%%%%%%%%%%%%%%%%%%%

\section{INTRODUCTION}

The Schur lemma is an elementary but extremely useful statement in representation theory of groups and algebras.\\
The lemma is named after Issai Schur who used it to prove
orthogonality relations and develop the basics of the representation theory of finite groups. Schur's lemma admits generalisations to Lie groups and Lie algebras, the most common of which is due to Jacques Dixmier.\\
The Schur Lemma  appears also in the study of stability conditions: when  an abelian category $\mathcal{A}$ is equipped with a stability condition, then  every endomorphism of a stable object is either the zero morphism or is an isomorphism, see \cite{Ru,BST}.
More generally, for $E_1, E_2 \in \mathcal{A}$ two stable objects of the same slope $\phi(E_1) =\phi(E_2)$, any morphism from $E_1$ to $E_2$ is either the zero morphism or is an isomorphism.
Bridgeland, in his seminal work on stability conditions on triangulated categories \cite{Br}, identifies the need to define a notion of stability on quasi-abelian categories, equipped with the exact structure of strict morphisms. 
This motivates the study of the Schur Lemma in the context of exact categories.\\
Exact categories generalise the abelian categories, namely additive categories with a choice of a Quillen exact structure \cite{Qu} which is given by a class of short exact sequences, called admissible pairs of morphisms, satisfying Quillen's axioms.\\
The notion of abelian category is an abstraction of basic properties of the category $Ab$ of abelian groups, more generally of the category
$Mod(R)$ of modules over some ring $R$. So it is not difficult to check that what holds for these categories generalise also to the abelian context.\\
In \cite{Bau}, Baumslag  gave a short proof of the Jordan–Hölder theorem for \emph{groups} by intersecting the terms in one subnormal series with those in the other series. The Schur lemma and classical isomorphism theorems for categories of modules play a crucial role in the proof.\\
Our motivation is to generalise Baumslag's idea, so we first generalise the Schur lemma to the context of exact categories and it turns out that the new version holds for any exact structure:
\begin{proposition}[Proposition \ref{schur}]({\bf{The $\E$-Schur lemma}}) 
 Let $\begin{tikzcd} X\arrow[r, "\circ" description, "f"] & Y \end{tikzcd}$
be an admissible non-zero morphism, that is, $f$ can be factored as an admissible epic followed by an admissible monic. Then, the following hold
\begin{enumerate}
\item[$\bullet$]if $X$ is $\E-$simple, then $f$ is an admissible monic,
\item[$\bullet$]if $Y$ is $\E-$simple, then $f$ is an admissible epic.
\end{enumerate}
\end{proposition}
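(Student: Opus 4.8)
The plan is to read everything off the defining factorisation of an admissible morphism together with the definition of $\mathcal{E}$-simplicity. Since $f$ is admissible, fix a factorisation $f = m \circ e$ with $e \colon X \to I$ an admissible epic and $m \colon I \to Y$ an admissible monic; the object $I$ is whatever the definition of admissibility hands us, so no uniqueness of such a factorisation is needed. Recall also that $X$ being $\mathcal{E}$-simple means that, up to isomorphism, its only admissible subobjects --- equivalently its only admissible quotients --- are $0$ and $X$, and dually for $Y$.

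For the first bullet, assume $X$ is $\mathcal{E}$-simple. Then the admissible epic $e \colon X \to I$ is, up to isomorphism, either the zero map $X \to 0$ or the identity $\mathrm{id}_X$. In the first case $f = m \circ e = 0$, contradicting the hypothesis that $f$ is non-zero; hence $e$ is an isomorphism. An isomorphism is in particular an admissible monic, and admissible monics are closed under composition by Quillen's axioms, so $f = m \circ e$ is an admissible monic.

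The second bullet is dual. Assume $Y$ is $\mathcal{E}$-simple. Then the admissible monic $m \colon I \to Y$ is, up to isomorphism, either $0 \to Y$ or $\mathrm{id}_Y$. In the first case $I = 0$, hence $f = 0$, again a contradiction; so $m$ is an isomorphism. An isomorphism is an admissible epic and admissible epics are closed under composition, so $f = m \circ e$ is an admissible epic.

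I do not expect a genuine obstacle here: the whole argument is unwinding the definition of ``admissible morphism'' and invoking two standard closure facts, namely that isomorphisms are simultaneously admissible monics and admissible epics and that each of these two classes is closed under composition. The only mild point of care is the bookkeeping with the auxiliary object $I$ and, if the paper phrases $\mathcal{E}$-simplicity via short exact sequences $0 \to S' \to S \to S'' \to 0$ rather than via admissible monics directly, translating the dichotomy ``$S' = 0$ or $S' = S$'' into the statement that the corresponding admissible monic (resp. epic) is trivial.
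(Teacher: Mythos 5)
Your proof is correct and follows essentially the same route as the paper: factor $f = m\circ e$, use $\mathcal{E}$-simplicity to force the kernel of $e$ (resp.\ the image object) to be trivial, rule out the zero case by $f\neq 0$, and conclude that the surviving factor is an isomorphism. The paper phrases the final step as ``$f\cong m$'' using closure of $\mathcal{E}$ under isomorphisms, while you invoke closure of admissible monics/epics under composition; these are interchangeable.
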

Secondly, we study the notions of abelian intersections and sum, aiming at a generalisation. The abelian intersection, which exists and is well defined in a pre-abelian exact category, is not necessarily an \emph{admissible} subobject. So we introduce the following  exact categories which are quasi-n.i.c.e.
in the sense that they are {\bf n}ecessarily {\bf i}ntersection {\bf c}losed {\bf e}xact categories that do not necessarly admit admissible sums, and which we call {\bf{A.I}} since they admit {\bf{A}}dmissible {\bf{I}}ntersections:

\begin{definition}[\ref{quasi-nice}]
An exact category $(\A, 
\E)$ is called an \emph{AI-category} if $\A$ is a  pre-abelian additive category  satisfying the following additional axiom:
\begin{itemize}
\item[$({AI})$] 
 The pull-back $A$ of two admissible monics $j: C \rightarrowtail D$ and $g: B\rightarrowtail D$ exists and yields two admissible monics $i$ and $f$.
\[
\begin{tikzcd}
{A} \arrow[r, tail, "i"] \arrow[d, "f"', tail] & {B} \arrow[d, "g", tail] \\
{C} \arrow[r, tail, "j"']           & {D} \arrow[ul, phantom, "\lrcorner" very near start]          
\end{tikzcd}
\]

\end{itemize}

\end{definition} 
Inspired by the abelian sum, we also introduce exact categories satisfying the admissible sum property, that we call {\bf{A.S}} exact categories, since they admit {\bf{A}}dmissible  {\bf{S}}ums:
\begin{definition}[\ref{AS}]
An exact category $(\A, \E)$ is called an \emph{AS-category} if it satisfies the following additional axiom:
\begin{itemize}
\item[$({AS})$]The morphism $u$ in the diagram below, given by the universal property of the push-out $E$ of $i$ and $f$, is an admissible monic.
\[
\begin{tikzcd}
{A} \arrow[r, tail, "i"] \arrow[d, "f"', tail] & {B} \arrow[d, "l", tail]  \arrow[ddr, "g", tail, bend left]& \\
{C} \arrow[r, tail, "k"']   \arrow[drr, tail, "j"', bend right]        & {E} \arrow[ul, phantom, "\ulcorner" near end]  \arrow[dr, tail, "u"] \\ & & D       
\end{tikzcd}
\]
\end{itemize}
\end{definition}
Combining these two new notions, we introduce a special sub-class of the AI exact categories, that we call {\bf{A.I.S}} exact categories, since they admit {\bf{A}}dmissible {\bf{I}}ntersections and {\bf{S}}ums. These categories were called \emph{nice exact categories} in a previous version of this work:
\begin{definition}[\ref{nice}]
An exact category $(\A, 
\E)$ is an AIS-category or a \emph{nice} or if it satisfies the (AIS) axiom which is defined by both the (AI) and the (AS) axioms at the same time.
\end{definition}
These categories were recently studied by the first author, Thomas Br\"ustle, Amit Shah, Aran Tattar and Sven-Ake Wegner and we obtained the following characterisations:
\begin{theorem}\cite[Theorem 6.1]{HSW}
 A category $(\mathcal{A}, \E_{max})$ is quasi-abelian if and only if it is an AI-category.
\end{theorem}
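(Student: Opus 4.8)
The plan is to prove the two implications separately, in both cases reducing everything to the description of the admissible monics of $\E_{max}$ as kernels. Recall that an AI-category is by definition pre-abelian, hence weakly idempotent complete, so that $\E_{max}$ is defined; recall also that in \emph{any} exact category an admissible monic is the kernel of its cokernel, so the only question is which kernels are admissible for $\E_{max}$. We shall also use the classical fact that a pre-abelian category is quasi-abelian precisely when all kernel--cokernel pairs form an exact structure (equivalently, when $\E_{max}$ consists of all kernel--cokernel pairs), in which case the admissible monics of $\E_{max}$ are exactly the kernels.

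For the implication ``quasi-abelian $\Rightarrow$ AI-category'', let $j\colon C\rightarrowtail D$ and $g\colon B\rightarrowtail D$ be admissible monics. Since $\A$ is pre-abelian the pullback $A$ exists, realised as $\Ker\big((g,-j)\colon B\oplus C\to D\big)$, and its legs $i\colon A\to B$ and $f\colon A\to C$ are monic. Writing $q=\Coker g$, so that $g=\Ker q$, a routine diagram chase on the pullback square gives $f=\Ker(qj)$, and symmetrically $i=\Ker\big((\Coker j)\,g\big)$; thus $i$ and $f$ are kernels, hence admissible monics, and (AI) holds.

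For the converse ``AI-category $\Rightarrow$ quasi-abelian'' I would first show that in an AI-category \emph{every} kernel is an admissible monic of $\E_{max}$. Let $i\colon A\rightarrowtail B$ be a kernel, say $A=\Ker p$ with $p\colon B\to C$, and consider inside $B\oplus C$ the graph morphism $\gamma\colon B\rightarrowtail B\oplus C$ of $p$ (components $1_B$ and $p$) and the canonical inclusion $\iota\colon B\rightarrowtail B\oplus C$ (components $1_B$ and $0$). Both are split monics, hence admissible monics, and a direct check of the universal properties identifies their pullback with $A$, both legs being exactly $i$:
\[
\begin{tikzcd}
A \arrow[r, tail, "i"] \arrow[d, tail, "i"'] & B \arrow[d, tail, "\gamma"] \\
B \arrow[r, tail, "\iota"'] & B\oplus C \arrow[ul, phantom, "\lrcorner" very near start]
\end{tikzcd}
\]
By (AI) the legs are admissible monics, so $i$ is one, as claimed. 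Granting this, every kernel--cokernel pair $(i,p)$ is a conflation of $\E_{max}$ --- it is the conflation determined by the admissible monic $i$, since necessarily $p=\Coker i$ --- so the class of all kernel--cokernel pairs is an exact structure, whence $\A$ is quasi-abelian.

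The routine parts are the two diagram chases in the forward direction and the verification of the pullback square in the converse. The one step needing genuine care is checking that the two legs of that last pullback are \emph{literally} $i$ rather than merely isomorphic to it, together with quoting the background facts about $\E_{max}$ in exactly the form used above; I do not anticipate a real obstacle, since all the new content is concentrated in this single ``graph trick'' pullback and, once it is in place, both directions are short.
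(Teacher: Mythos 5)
This theorem is only \emph{cited} in the paper (from \cite[Theorem 6.1]{HSW}); no proof of it appears here, so there is nothing internal to compare against. Judged on its own, your argument is correct and complete in outline. The forward direction is exactly the standard one: in a quasi-abelian category $\E_{max}$ is the class of all kernel--cokernel pairs, so admissible monics are precisely the kernels, and the general pre-abelian fact that the pullback of $\Ker q$ along $j$ is $\Ker(qj)$ gives (AI). The converse via the ``graph trick'' pullback of $\binom{1_B}{p}$ and $\binom{1_B}{0}$ inside $B\oplus C$ is a genuinely nice reduction: both are split monics (hence admissible for any exact structure, their cokernels existing since $\binom{1_B}{p}$ differs from the coproduct inclusion by the automorphism $\bsm 1 & 0\\ p & 1\esm$ of $B\oplus C$), the pullback is $\Ker p$ with both legs equal to $i$, and (AI) then forces every kernel to be an admissible monic, whence $\E_{max}$ is the class of all kernel--cokernel pairs and $\A$ is quasi-abelian. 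Two points deserve explicit attribution rather than the label ``classical fact'', since they carry real weight: that a pre-abelian category is quasi-abelian if and only if all kernel--cokernel pairs form an exact structure (Schneiders; see also \cite[Section 4]{Bu}), and that $\E_{max}$ exists on any pre-abelian category (Sieg--Wegner). With those citations in place, and the (easy) observation that admissible monics are stable under composition with isomorphisms so that the identification of the pullback legs up to canonical isomorphism suffices, your proof stands.
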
 

\begin{theorem}\cite[Theorem 4.22]{BHT}
An exact category $(\A, \E)$ is an AIS-category if and only if $\A$ is abelian and $\E=\E_{all}$.
\end{theorem}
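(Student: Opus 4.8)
Suppose $\A$ is abelian and $\E=\E_{all}$. Then $\A$ is pre-abelian additive, has all pullbacks and pushouts, and every monic (resp.\ epic) is an admissible monic (resp.\ admissible epic). Since the pullback of a monic is a monic in an abelian category, $(AI)$ holds at once. For $(AS)$, note that in the combined diagram $A$ is the pullback of the monics $g$ and $j$, i.e.\ the intersection of the subobjects $B,C\subseteq D$; hence the pushout $E=\Coker\!\big(A\xrightarrow{(i,\,-f)}B\oplus C\big)$ represents $B+C\subseteq D$, and the induced morphism $u$, coming from $(g,j)\colon B\oplus C\to D$, is monic because $\Ker(g,j)=\Im(i,-f)$ (as $g,j$ are monic and $A=B\times_D C$); a monic being admissible here, $(\A,\E_{all})$ is an AIS-category.

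\textbf{Reduction for the ``only if'' direction.} Assume $(\A,\E)$ is an AIS-category. The whole statement would follow from the single assertion
\[(\star)\qquad\text{every monomorphism of }\A\text{ is an }\E\text{-admissible monic.}\]
Indeed, an $\E$-admissible monic is always a kernel, so by $(\star)$ every monic is a kernel; thus any pullback of two kernels, being a monic, is again a kernel, so $(\A,\E_{max})$ is an AI-category and $\A$ is quasi-abelian by \cite[Theorem~6.1]{HSW}. In a quasi-abelian category the canonical comparison $\Coim(f)\to\Im(f)$ is always a bimorphism; by $(\star)$ it is an admissible monic, hence a kernel, and a kernel that is also epic is an isomorphism, so $\Coim(f)\to\Im(f)$ is invertible for all $f$. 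A pre-abelian category with this property is abelian, and once $\A$ is abelian every kernel--cokernel pair has admissible monic part by $(\star)$, hence is a conflation, so $\E=\E_{all}$.

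\textbf{Proving $(\star)$.} This is where the content lies. A priori the only admissible monics are the split ones (including $0\rightarrowtail Y$), so one must bootstrap: admissible monics are closed under composition, $(AI)$ closes the admissible subobjects of each object under intersections, and $(AS)$ closes them under sums, and the goal is to deduce from this that \emph{every} subobject of every object is admissible. I would first use $(AI)$ and $(AS)$ on diagrams built from split monics into biproducts (such as the graph of a morphism and the coordinate inclusions of $Y\oplus Y$ or $X\oplus Y$) to show that every kernel is an admissible monic; combined with quasi-abelianness this makes the canonical factorization $m=\iota\circ\bar m$ of a monic available with $\iota\colon\Im(m)\rightarrowtail Y$ admissible, and a final application of $(AS)$ — realizing the subobject $m$ of $Y$ as a sum of admissible subobjects — then forces $m$ itself to be admissible, equivalently $\bar m$ to be invertible.

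\textbf{Main obstacle.} The heart of the argument, and where I expect all the difficulty, is exactly the production of the $(AS)$-instance that does this. Axioms $(AI)$ and $(AS)$ merely say that the admissible-subobject poset of each object is closed under intersections and sums, and by \cite[Theorem~6.1]{HSW} closure under intersections alone is equivalent to quasi-abelianness, which is strictly weaker than abelianness (for instance filtered vector spaces are quasi-abelian but not abelian). So all the extra strength must be extracted from the \emph{sum} axiom $(AS)$: given a monic that is not admissible — equivalently, a coimage-to-image comparison that is not invertible — one must exhibit an explicit configuration to which $(AS)$ applies and which it would violate, and check it satisfies the $\E$-admissibility hypotheses. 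Pinning down that configuration is the crux.
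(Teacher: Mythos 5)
Your ``if'' direction is correct and is essentially the paper's own argument (Propositions \ref{intersection in abelian} and \ref{sum in abelian}, packaged as Corollary \ref{abelian is nice}): in $(\A,\E_{all})$ the pullback of two monics is the kernel of $\bsm i_1 & -i_2\esm$, hence monic and admissible, and the induced map $u$ out of the pushout is monic because the kernel of $\bsm g & -j \esm$ is exactly the pullback, equivalently because $\Coim(s)\cong\Im(s)$ in an abelian category. Your reduction of the ``only if'' direction to the single assertion $(\star)$ (every monomorphism of $\A$ is an $\E$-admissible monic) is also sound as a skeleton: $(\star)$ makes $(\A,\E_{max})$ an AI-category, so $\A$ is quasi-abelian by \cite[Theorem 6.1]{HSW}; the coimage-to-image comparison of any morphism is a bimorphism, by $(\star)$ an admissible monic and hence a kernel, and a kernel that is epic is an isomorphism, giving abelianness and then $\E=\E_{all}$.

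The genuine gap is that $(\star)$ itself --- which you correctly identify as ``where the content lies'' --- is never proved. What you supply is a plan (``use $(AI)$ and $(AS)$ on diagrams built from split monics into biproducts \dots\ a final application of $(AS)$ \dots\ then forces $m$ itself to be admissible'') followed by the admission that ``pinning down that configuration is the crux.'' But exhibiting that configuration \emph{is} the theorem: one must actually write down the split admissible monics (for instance the graph $\bsm 1 \\ f \esm\colon X\rightarrowtail X\oplus Y$ of an arbitrary morphism $f$ together with the coordinate inclusion of $Y$), check that the hypotheses of $(AI)$ and $(AS)$ are met at every stage with only already-established admissible monics as inputs, identify the resulting pullbacks and pushouts with $\Ker f$, $\Im f$, etc., and deduce from the admissibility of the output arrows that an arbitrary kernel, and then an arbitrary monic, is admissible. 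None of that is done, so the hard direction remains unproven. Note finally that the paper under review does not prove this direction either --- it imports the statement from \cite[Theorem 4.22]{BHT} and only proves the ``if'' half internally --- so there is no in-paper proof you could have been expected to reproduce; but a self-contained proof attempt still owes the argument for $(\star)$, and without it the proposal is a correct strategy rather than a proof.
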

These results make us conclude that the pull-back and push-out notions of unique intersection and sum do not always apply to all exact categories. This motivates the study of general admissible intersection and sum in \cite[Section 5]{BHT}, where we introduce (\cite[Definition 5.5]{BHT}) a notion of intersection and sum that works for all exact categories, and using this we study the Jordan-H\"older exact categories \cite[Theorem 5.11, 6.8, 6.13]{BHT}.\\

Finally, we reprove the classical isomorphism theorems from module theory using exact categorical arguments and we apply it all in the last section, where we fix an abelian category $\A$ with its maximal exact structure $\E_{all}$ given by the class of all short exact sequences and follow Baumslag's ideas to obtain a proof of the Jordan-H\"older theorem for abelian categories using the language of exact structures.
\begin{comment}
\begin{theorem}[\ref{JH}] {\bf (Jordan-H\"older theorem)} Let $(\mathcal{A}, \mathcal{E})$ be an AIS-category. Any two $\mathcal{E}-$composition series for a finite object $X$ of $\mathcal{A}$
\[ 0=X_0 \; \imono{i_0} X_1 \;\imono{i_1}  \cdots \; \imono{i_{m-2}} \;  X_{m-1}\;\imono{i_{m-1}}\; X_m=X \] and \[0=X'_0 \; \imono{i'_0} X'_1 \;\imono{i'_1}  \cdots \; \imono{i'_{n-2}} \;  X'_{n-1}\;\imono{i'
_{n-1}}\; X'_n=X \]

are equivalent, that is, they have the same length and the same composition factors, up to permutation and isomorphism.  
 
\end{theorem}
\end{comment}
This proof is different than the abelian proof studied in \cite[Section 4.5, page 174]{par}.\\
Note that, parallel to our work, Enomoto studies the Schur lemma in \cite{E20} from the viewpoint of semibricks and wide subcategories.

%%%%%%%%%%%%%%%%%%%%%%%%%%%%%%%%%%%%%

\paragraph{Acknowledgements.}
The authors would like to thank their supervisor Thomas Br\"ustle
for his support, and would like also to thank Aran Tattar, Amit Shah, Sven-Ake Wegner and Haruhisa Enomoto for interesting discussions.\\
The authors were supported by Bishop's University, Université de Sherbrooke, and NSERC of Canada.
The first author is supported by the scholarship "thésards étoiles" of the ISM.
%%%%%%%%%%%%%%%%%%%%%%%%%%%%%%%%%%%%%%

\section{Background}

%%%%%%%%%%%%%%%%%%%%%%%%%%%%%%%%%%%%%%

In this section we recall from \cite{GR,Bu} the definition of Quillen exact structures and the definition of a pre-abelian additive category.

\begin{definition}{}Let $\mathcal{A}$ be an additive category. A kernel-cokernel pair $(i, d)$ in $\mathcal{A}$ is a pair of composable morphims such that $i$ is kernel of $d$ and $d$ is cokernel of $i$.
If a class $\mathcal{E}$ of kernel-cokernel pairs on $\mathcal{A}$ is fixed, an {\em admissible monic} is a morphism $i$ for which there exist a morphism $d$ such that $(i,d) \in \mathcal{E}$. An {\em admissible epic} is defined dually. Note that admissible monics and admissible epics are referred to as inflation and deflation in \cite{GR}, respectively. 
We depict an admissible monic by  $\mono$
and an admissible epic by $\epi$.
An {\em exact structure} $\mathcal{E}$ on $\A$ is a class of kernel-cokernel pairs $(i, d)$ in $\A$ which is closed under isomorphisms and satisfies the following axioms:
\begin{enumerate}
\item[(A0)] For all objets  $A \in Obj\mathcal{A}$ the identity $1_A$ is an admissible monic
\item[{(A0)$^{op}$}] For all objets  $A \in Obj\mathcal{A}$ the identity $1_A$ is an admissible epic

\item[(A1)] the class of admissible monics is closed under composition
\item[{(A1)}$^{op}$] the class of admissible epics is closed under composition
\item[(A2)] 
 The push-out of an admissible monic $i: A \mono B$ along an arbitrary morphism $f: A \to C$ exists and yields an admissible monic $j$:
\[\xymatrix{
A \; \ar[d]_{f} 
\ar@{ >->}[r]^{i}  \ar@{}[dr]|{\text{PO}} 
& B\ar[d]^{g}\\
C \; \ar@{>->}[r]^{j} & D}
\]

\item[{(A2)}$^{op}$]The pull-back of an admissible epic $h$ along an arbitrary morphism $g$ exists and yields an admissible epic $k$
\[\xymatrix{
A \; \ar[d]^{f} 
\ar@{ ->>}[r]^{k} \ar@{}[dr]|{\text{PB}}  & B\ar[d]^{g}\\
C \; \ar@{->>}[r]^{h} & D}
\]
\end{enumerate}
An {\em exact category} is a pair $(\mathcal{A}, \mathcal{E})$ consisting of an additive category $\mathcal{A}$ and an exact structure $\mathcal{E}$ on $\mathcal{A}$. The pairs $(i,d)$ forming the class $\mathcal{E}$ are called {\em admissible short exact sequences}, or just {\em admissible sequences.}

\end{definition}

\begin{definition}
\cite[Definition 8.1]{Bu}\label{ad mor}
A morphism $f: A \rightarrow B$ in an exact category is called \emph{admissible} if it factors as a composition of an admissible monic with an admissible epic. Admissible morphisms will sometimes be displayed as  
\[ \begin{tikzcd} A\arrow[r, "\circ" description, "f"] & B \end{tikzcd} \]
in diagrams, and the classes of admissible arrows of $\A$ will be denoted as ${\Hom^{ad}_{\A}}(-,-)$.
\end{definition}

\begin{proposition}
\cite[Proposition 2.16]{Bu}\label{obscure axiom}
Suppose that $i: A\rightarrow B$ is a morphism in $\A$ admitting a cokernel. If there exists a morphism $j:B\rightarrow C$ such that the composite $j\circ i: A\mono C$ si an admissible monic, then $i$ si an admissible monic.
\end{proposition}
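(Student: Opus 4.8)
The plan is to exhibit $i$ as the inflation of an admissible short exact sequence; a chosen cokernel $d\colon B\twoheadrightarrow D$ of $i$ is then forced to be its deflation, so it is enough to produce a kernel--cokernel pair $(i,d)$ lying in $\E$. Note first that $ji$, being an admissible monic, is a monomorphism, whence $i$ is one too. I would begin with a harmless normalisation: replacing $j$ by $\binom{j}{d}\colon B\to C\oplus D$ and $C$ by $C\oplus D$ preserves the hypotheses --- indeed $\binom{j}{d}\circ i=\binom{ji}{0}$ is the admissible monic $ji$ followed by the split inflation $C\rightarrowtail C\oplus D$, and split sequences lie in every exact structure --- while also arranging that $d=\pi j$ for the split deflation $\pi\colon C\oplus D\twoheadrightarrow D$, so that $\pi\circ(ji)=0$. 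Thus we may assume there is an admissible epic $\pi\colon C\twoheadrightarrow D$ with $\pi j=d$.

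\smallskip

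Next I would set up the ambient square. Form the push-out of the admissible monic $ji\colon A\rightarrowtail C$ along $i$; by (A2) the leg parallel to $ji$ is again an admissible monic, and the standard facts about such push-out squares --- that they are bicartesian and compatible with the formation of cokernels (\cite[Proposition 2.12]{Bu}) --- let one identify the push-out with $C\oplus D$ and the square with
\[
\begin{tikzcd}
A \arrow[r, "i"] \arrow[d, tail, "ji"'] & B \arrow[d, tail, "\binom{j}{d}"] \\
C \arrow[r, tail, "\binom{1}{0}"'] & C\oplus D .
\end{tikzcd}
\]
Hence this square is bicartesian, the maps $ji,\ \binom{j}{d},\ \binom{1}{0}$ are admissible monics, and $\binom{j}{d}$ has a cokernel; moreover, reading the square as a pull-back and unwinding the universal property shows that $\ker d$ exists and equals $i$, so $(i,d)$ is already a kernel--cokernel pair. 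It therefore remains to show that $i$ is an admissible monic (equivalently, that $d$ is an admissible epic).

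\smallskip

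For this \emph{descent} step I would read the displayed square as a pull-back whose right-hand edge $\binom{1}{0}$ is an admissible monic and whose opposite edge $\binom{j}{d}$ possesses a cokernel, and then extract admissibility of $i$ from the cancellation properties of admissible morphisms along (bi)cartesian squares --- everything built from (A2), its dual and the biproduct sequences, as in \cite{Bu} --- after which the ``admissible epic'' half of the statement follows dually, by pushing out and invoking (A2)$^{op}$. I expect this last step to be the genuine obstacle. The tempting shortcuts are not available: split monomorphisms need not be admissible, and pull-backs of admissible monomorphisms along arbitrary, or even along admissible-monic, morphisms need not be admissible --- these fail in a general exact category, and their failure is precisely what the AI- and AS-category axioms below are designed to repair. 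So every appeal to admissibility has to be funnelled through (A2), its dual and the biproduct short exact sequences, and the real work is the careful bookkeeping of which morphisms are admissible at each stage; this is exactly why the statement was originally postulated as a separate ``obscure'' axiom before being recognised as redundant.
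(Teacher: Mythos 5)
The paper does not actually prove this statement --- it is quoted from B\"uhler --- so I am comparing your proposal with the standard argument of Keller and B\"uhler that the citation points to. Your setup is correct and coincides with theirs: the square with top edge $i$, left edge $ji$, right edge $\bsm j \\ d \esm$ and bottom edge $\bsm 1 \\ 0 \esm$ is a push-out of the span $(ji,i)$ (verifying this universal property is precisely where the hypothesis that $i$ admits a cokernel enters, which you leave implicit), so by (A2) and \cite[Proposition 2.12]{Bu} it is bicartesian, $\bsm j \\ d \esm$ is an admissible monic, and $i$ is a kernel of $d$. The problem is that your proof stops exactly where the content of the proposition lies: knowing that $(i,d)$ is a kernel--cokernel pair says nothing about its admissibility, and the ``descent step'' you describe --- extracting admissibility of $i$ ``from the cancellation properties of admissible morphisms along (bi)cartesian squares'' --- is not an argument. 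As you yourself note, the square cannot be read as a pull-back of an admissible monic along anything that a general exact structure controls, and you explicitly flag this step as ``the genuine obstacle'' without resolving it. As written, the proposal is incomplete at its crux.

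The gap can be closed as follows. Condition (iv) of \cite[Proposition 2.12]{Bu}, applied to your push-out square, says that $\bsm i \\ -ji \esm : A\to B\oplus C$ is an admissible monic. Composing with the automorphism $\bsm 1 & 0 \\ j & 1 \esm$ of $B\oplus C$ shows that $\bsm i \\ 0 \esm : A\to B\oplus C$ is an admissible monic; a direct check shows its cokernel is $d\oplus 1_{C}\colon B\oplus C\to D\oplus C$, which is therefore an admissible epic. Now pull back the admissible epic $d\oplus 1_{C}$ along the morphism $\bsm 1 \\ 0 \esm : D\to D\oplus C$: the pull-back is $B$ itself, with structure maps $\bsm 1 \\ 0 \esm : B\to B\oplus C$ and $d\colon B\to D$, so (A2)$^{op}$ forces $d$ to be an admissible epic. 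Since $i$ is a kernel of $d$ and $\E$ is closed under isomorphisms, $(i,d)\in\E$ and $i$ is an admissible monic. The essential input is thus (A2)$^{op}$ applied to the deflation $d\oplus 1_{C}$ --- not any cancellation property of bicartesian squares --- and this is the part your sketch never supplies.
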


\begin{definition}An additive category $\A$ is  \emph{pre-abelian} if it has kernels and cokernels.
\end{definition}

\begin{example}An additive category $\A$ is \emph{abelian} if it is \emph{pre-abelian} and all morphisms are \emph{strict}. So abelian categories are an example of pre-abelian additive categories where every morphism is admissible.

\end{example}

%%%%%%%%%%%%%%%%%%%%%%%%%%%%%%%%

%%%%%%%%%%%%%%%%%%%%%%%%%%%%%%%%%%%%
\section{The $\E$-Schur lemma}
In this section we generalise the abelian Shur lemma to the context of exact categories.

\begin{definition}\cite[Definition 3.1]{BHLR} 
Let $A$ and $B$ be objects of an exact category $(\mathcal{A},\E)$. If there is an admissible monic $i: A \rightarrowtail B$ we say the pair $(A,i)$ is an {\em admissible subobject} or {\em $\mathcal{E}-$subobject of $B$}. Often we will refer to the pair $(A,i)$ by the object $A$ and write $A {\subset}_{\mathcal{E}} B $.
If $i$ is not an isomorphism, we use the notation  $A {\subsetneq}_{\mathcal{E}} B $ and if, in addition, $A \not \cong 0$ we  say that $(A,i)$ is a \emph{proper} admissible subobject of $B$.
\end{definition}

\begin{definition}\cite[Definition 3.3]{BHLR}
A non-zero object $S $ in $(\A,\E)$ is {\em $\mathcal{E}-$simple} if $S$ admits no $\E-$sub\-objects except $0$ and $S$, that is, whenever $ A \subset_\E S$, then $A$ is the zero object or isomorphic to  $S$.
\end{definition}

\begin{remark}\label{quotient}
Let $A$ be an $\mathcal{E}-$subobject of $B$ given by the monic $A{\imono{i}} B$.
We denote by $B{/}^{i}A$ (or simply $B/A$ when $i$ is clear from the context) the Cokernel of $i$, thus we denote the corresponding admissible sequence as
\[ A \imono{i} B\epi B/A.\]
\end{remark}

\begin{remark}\label{zero coker}
An admissible monic $A \imono{i} B$ is relatively proper precisely when its co\-kernel is non-zero. 
In fact, by uniqueness of kernels and cokernels, the exact sequence 
$$B\imono{1_B} B\epi 0$$ 
is, up to isomorphism, the only one with zero cokernel. Thus an admissible monic $i$ has $\Coker(i) = 0$ precisely when $i$ is an isomorphism. Dually, an admissible epic $B \iepi{d} C$ is an isomorphism precisely when $\Ker(d) = 0$. In particular a morphism
which is at the same time an admissible monic and epic is an isomorphism.\\
Note that a subobject is proper means {\em all} admissible monics are proper.
\end{remark}

\begin{comment}
\begin{definition}\cite[Definition 6.4]{BHLR}
An object $X$ of $(\A, \E)$ is $\E-$Noetherian if any increasing sequence of $\E-$subobjects of $X$

\[X_1 \;\mono X_2  \; \mono \cdots \mono \;  X_{n-1}\;\mono\; X_n\mono X_n \; \cdots \]
becomes stationary.
Dually, an object $X$ of $(\A, \E)$ is $\E-$Artinian if any descending sequence of $\E-$subobjects of $X$

\[\cdots \; X_n \mono X_n \;  \mono X_{n-1}\;\mono  \cdots \; \mono X_2 \mono \;  X_1\;\] 
becomes stationary.
The exact category $(\A, \E)$ is called $\E-$Artinian 
(respectively $\E-$Noetherian) if every object is $\E-$Artinian (respectively \mbox{$\E-$Noetherian)}.
\end{definition}
\bigskip
\end{comment}

\begin{lemma}({\bf{The $\E$-Schur lemma}})\label{schur}
Let $\begin{tikzcd} X\arrow[r, "\circ" description, "f"] & Y \end{tikzcd}$
be an admissible non-zero morphism.
\begin{enumerate}
\item[$\bullet$]if $X$ is $\E-$simple, then $f$ is an admissible monic,
\item[$\bullet$]if $Y$ is $\E-$simple, then $f$ is an admissible epic.
\end{enumerate}
\end{lemma}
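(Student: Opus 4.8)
The plan is to exploit the factorization of the admissible morphism $f$ through its image. By Definition~\ref{ad mor}, since $f$ is admissible we may write $f = m \circ e$ where $\begin{tikzcd} X \arrow[r, two heads, "e"] & I \end{tikzcd}$ is an admissible epic and $\begin{tikzcd} I \arrow[r, tail, "m"] & Y \end{tikzcd}$ is an admissible monic. The object $I$ here plays the role of the image of $f$. The first observation is that $f \neq 0$ forces $I \not\cong 0$: if $I \cong 0$ then $f = m \circ e$ factors through the zero object and hence is the zero morphism, a contradiction.

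For the first bullet, suppose $X$ is $\mathcal{E}$-simple. Consider the admissible epic $e \colon X \twoheads\!\!\!\!\rightarrow I$ and let $k \colon K \rightarrowtail X$ be its kernel, so that $(k,e)$ is an admissible sequence. Then $K$ is an $\mathcal{E}$-subobject of $X$, so by $\mathcal{E}$-simplicity either $K \cong 0$ or $K \cong X$. If $K \cong X$, then $k$ is at the same time an admissible monic with zero cokernel structure forcing $e$ to have $I \cong \Coker(k) \cong 0$ (using Remark~\ref{zero coker}), contradicting $I \not\cong 0$. Hence $K \cong 0$, so by Remark~\ref{zero coker} the admissible epic $e$ has zero kernel and is therefore an isomorphism. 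Then $f = m \circ e$ is a composition of an isomorphism with an admissible monic, hence itself an admissible monic (the class of admissible monics is closed under isomorphism and under composition by (A1), and isomorphisms are admissible monics by (A0) together with closure under isomorphism).

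For the second bullet, suppose $Y$ is $\mathcal{E}$-simple. Now consider the admissible monic $m \colon I \rightarrowtail Y$; it exhibits $I$ as an $\mathcal{E}$-subobject of $Y$, so either $I \cong 0$ or $I \cong Y$. Since $I \not\cong 0$, we get $I \cong Y$, so $m$ is an admissible monic with $\Coker(m) \cong 0$, hence an isomorphism by Remark~\ref{zero coker}. Therefore $f = m \circ e$ is a composition of an admissible epic with an isomorphism, hence an admissible epic by the dual argument (using (A1)$^{op}$ and (A0)$^{op}$).

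The main obstacle, if any, is purely bookkeeping: one must be careful that ``$\mathcal{E}$-subobject isomorphic to $X$ (resp. $Y$)'' genuinely means the structural monic is an isomorphism, and hence that its partner in the kernel-cokernel pair degenerates as claimed; this is exactly the content of Remark~\ref{zero coker}, which identifies the admissible sequences with zero kernel or zero cokernel up to isomorphism. No appeal to pullbacks, pushouts, or any (AI)/(AS)-type axiom is needed — the argument uses only the factorization of admissible morphisms and the degeneracy criterion for admissible monics and epics, which is why the statement holds for an arbitrary exact structure.
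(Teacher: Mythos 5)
Your proposal is correct and follows essentially the same route as the paper's proof: factor the admissible morphism as $f = m\circ e$ through its image, apply $\E$-simplicity to $\Ker(e)$ (resp.\ to the image as an $\E$-subobject of $Y$), rule out the degenerate case using $f\neq 0$, and conclude via Remark~\ref{zero coker} that $e$ (resp.\ $m$) is an isomorphism. The only difference is cosmetic bookkeeping in how the contradiction is phrased ($I\cong 0$ versus $e=0$), so nothing further is needed.
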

\begin{proof}
Let 
\[ \xymatrix{X\ar[rd]^{f} \ar@{->>}[d]_{e}  \\ S \;\ar@{>->}[r]^-m  & Y } \]

be the factorisation of $f$ as a composition of an admissible epic $e$  with an admissible monic $m$.
\begin{enumerate}
\item[$\bullet$]
if $X$ is $\E-$simple then either $\Ker(e)=X$ or $\Ker(e)=0$, but in the first case $e=0$ and so $f=0$,  contradicting the assumption $f\neq 0$. Hence $Ker(e)=0$, and by Remark \ref{zero coker}, $e\cong 1_{X}$ and $f \cong m$ and therefore $f$ is an admissible monic.

\item[$\bullet$] If $Y$ is $\E-$simple, then the $\E-$subobject $S$ is either zero or equal to $Y$, but in case $S=0$, $e=0$, we get $f=m\circ e=0$ which contradicts $f\neq 0$. Therefore $S=Y$ and $m: Y\mono Y$ is an admissible monic with zero cokernel. By  Remark \ref{zero coker},  $m\cong 1_{Y}$, and $f\cong e$, which means that $f$ is an admissible epic.

\end{enumerate}
\end{proof}

\begin{cor}\label{Aut}
 Let $S$ be an $\E-$simple object, then the non-zero admissible endomorphisms $\begin{tikzcd} S\arrow[r, "\circ" description, "f"] & S \end{tikzcd}$ form the group Aut$(S)$ of automorphisms of $S$.
\end{cor}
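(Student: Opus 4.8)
The plan is to read the statement off directly from the $\E$-Schur lemma (Lemma \ref{schur}) together with Remark \ref{zero coker}. Write $G$ for the set of non-zero admissible endomorphisms of $S$. The content of the corollary is the set-theoretic identity $G = \mathrm{Aut}(S)$; once this is established, the group structure on $G$ (closure under composition, the neutral element $1_S$, and inverses) is simply inherited from $\End(S)$.

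For the inclusion $G \subseteq \mathrm{Aut}(S)$, let $f\colon S \to S$ be admissible with $f \neq 0$. Since $S$ is $\E$-simple, the first bullet of Lemma \ref{schur}, applied with $X = S$, shows $f$ is an admissible monic, and the second bullet, applied with $Y = S$, shows $f$ is an admissible epic. By the last assertion of Remark \ref{zero coker}, a morphism that is simultaneously an admissible monic and an admissible epic is an isomorphism, so $f \in \mathrm{Aut}(S)$.

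For the reverse inclusion $\mathrm{Aut}(S) \subseteq G$, let $\varphi$ be an automorphism of $S$. The kernel-cokernel pair consisting of $\varphi\colon S\to S$ together with its cokernel $S\to 0$ is isomorphic (via $1_S$, $\varphi$ and $1_0$ on the three objects) to the pair $(1_S\colon S\to S,\ S\to 0)$, which belongs to $\E$ by axiom (A0); since $\E$ is closed under isomorphisms, $\varphi$ is an admissible monic, and dually an admissible epic, hence admissible. Moreover $\varphi \neq 0$, for otherwise $1_S = \varphi^{-1}\circ\varphi = 0$, forcing $S\cong 0$ and contradicting the fact that $\E$-simple objects are non-zero. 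Hence $\varphi\in G$, and the two inclusions yield $G = \mathrm{Aut}(S)$.

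There is no genuine obstacle here; the statement is essentially a repackaging of the $\E$-Schur lemma. The only points requiring a moment's care are (i) that every isomorphism is both an admissible monic and an admissible epic, which uses (A0), (A0)$^{op}$ and the closure of $\E$ under isomorphisms, and (ii) that $\E$-simplicity forbids $S\cong 0$, which is precisely what guarantees $0\notin\mathrm{Aut}(S)$, so that the phrase ``non-zero admissible endomorphism'' singles out exactly the automorphisms of $S$.
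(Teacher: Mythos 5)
Your proof is correct and follows essentially the same route as the paper: the $\E$-Schur lemma gives that a non-zero admissible endomorphism is both an admissible monic and an admissible epic, hence an isomorphism by Remark \ref{zero coker}, and conversely every isomorphism is admissible. You simply spell out in more detail the two points the paper leaves implicit (that an isomorphism is an admissible monic/epic via closure of $\E$ under isomorphisms and (A0), and that $\E$-simplicity rules out $S\cong 0$ so automorphisms are non-zero), which is a welcome clarification rather than a deviation.
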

\begin{proof}
It follows from Lemma \ref{schur} that any non-zero admissible morphism $\begin{tikzcd} S\arrow[r, "\circ" description, "f"] & S \end{tikzcd}$ is an admissible monic and an admissible epic, thus $f$ is an isomorphism.\\
Conversely, every isomorphism is admissible, so we get the group of automorphisms of $S$ which is closed under composition by (A2) or $(A2)^{op}$.
\end{proof}
\begin{remark}
The classical Schur lemma on abelian categories states that the endomorphism ring of a simple object is a division ring. We show in Corollary \ref{Aut} that any non-zero admissible endomorphism of an $\E-$simple object is invertible, but it is not true in general that the set of admissible endomorphisms forms a ring.
 In fact, the composition of admissible morphisms need not be admissible, (see \cite[Remark 8.3]{Bu}), nor is it true for sums of admissible morphisms, as we discuss in \cite{BHT}. 
\end{remark}

%%%%%%%%%%%%%%%%%%%%%%%%%%%%%%%%%%%%
\section{AI, AS and AIS-CATEGORIES }
Let us first recall the definitions of \emph{intersection} and \emph{sum} of subobjects for abelian categories in general as mentioned in \cite[section 5]{Gabriel}
 or as defined in \cite[Definition 2.6 ]{Po}:
 \bigskip
 
\begin{definition}\label{ab sum}Let $(X_1,i_1)$, $(X_2,i_2)$ be two subobjects of an object $X$ in an abelian category, that is, we consider monics $i_1:X_1\to X$ and $i_2:X_2\to X$. We denote by $X_1{+}_{X}X_2$ (or simply $X_1+X_2$ when there is no possibility of confusion) the {\em sum of $X_1$ and $X_2$}, which is defined as the image $\Im(s)$ of the morphism \[s=[i_1 \; i_2]: X_1\oplus X_2\rightarrow X.\] 

\bigskip
\end{definition}

\begin{definition}\label{ab intersection}
Let $(X_1,i_1)$, $(X_2,i_2)$ be two subobjects of an object $X$ in an abelian category. We denote by $X_1{\cap}_X X_2$ (or simply $X_1{\cap} X_2$)
the {\em intersection of $X_1$ and $X_2$}, defined as the kernel $\Ker(t) $ of the  morphism 
\[t = \begin{bmatrix}
d_1 \\
d_2 \\
\end{bmatrix}: X\rightarrow Y_1\oplus Y_2\] 
where $d_1:X \to Y_1$ and $d_2: X \to Y_2$ are the cokernels of the monics $i_1$ and $i_2$, respectively. 
\end{definition}
\bigskip

Note that this intersection, which exists and is well defined in a pre-abelian exact category, is not necessarily an \emph{admissible} subobject. So let us introduce the following  exact categories which are quasi-n.i.c.e.
in the sense that they are {\bf n}ecessarily {\bf i}ntersection {\bf c}losed {\bf e}xact categories that does not necessarly admit admissible sums, and which we call {\bf{A.I}} since they admit {\bf{A}}dmissible {\bf{I}}ntersections:

\begin{definition}\label{quasi-nice}
An exact category $(\A, 
\E)$ is called an \emph{AI-category} if $\A$ is  pre-abelian additive category  satisfying the following additional axiom:
\begin{itemize}
\item[$({AI})$] 
 The pull-back $A$ of two admissible monics $j: C \rightarrowtail D$ and $g: B\rightarrowtail D$ exists and yields two admissible monics $i$ and $f$.
\[
\begin{tikzcd}
{A} \arrow[r, tail, "i"] \arrow[d, "f"', tail] & {B} \arrow[d, "g", tail] \\
{C} \arrow[r, tail, "j"']           & {D} \arrow[ul, phantom, "\lrcorner" very near start]          
\end{tikzcd}
\]

\end{itemize}

\end{definition} \label{nice def}
Let us aslo introduce exact categories satisfying the admissible sum propertys, that we call {\bf{A.S}} exact categories, since they admit {\bf{A}}dmissible  {\bf{S}}ums:
\begin{definition}\label{AS}
An exact category $(\A, \E)$ is called an \emph{AS-category} if it satisfies the following additional axiom:
\begin{itemize}
\item[$({AS})$]The morphism $u$ in the diagram below, given by the universal property of the push-out $E$ of $i$ and $f$, is an admissible monic.
\[
\begin{tikzcd}
{A} \arrow[r, tail, "i"] \arrow[d, "f"', tail] & {B} \arrow[d, "l", tail]  \arrow[ddr, "g", tail, bend left]& \\
{C} \arrow[r, tail, "k"']   \arrow[drr, tail, "j"', bend right]        & {E} \arrow[ul, phantom, "\ulcorner" near end]  \arrow[dr, tail, "u"] \\ & & D       
\end{tikzcd}
\]
\end{itemize}
\end{definition}
Let us now introduce a special sub-class of the AI exact categories, that we call {\bf{A.I.S}} exact categories, since they admit {\bf{A}}dmissible {\bf{I}}ntersections and {\bf{S}}ums. These categories were called \emph{nice exact categories} in a previous version of this work:
\begin{definition}\label{nice}
An exact category $(\A, 
\E)$ is an AIS-category or a \emph{nice} or if it satisfies the following additional axiom:
\begin{itemize}
\item[$(AIS)$] 
 The pull-back of two admissible monics $j: C \mono D$ and $g: B\mono D$ exists and yields two admissible monics $i$ and $f$:
\[\xymatrix{
A \; \ar@{>->}[d]_{f} 
\ar@{ >->}[r]^{i}  \ar@{}[dr]|{\text{PB}} 
& B\ar@{>->}[d]^{g}\\
C \; \ar@{>->}[r]^{j} & D}
\]

and moreover, the push-out along these pull-backs yields an admissible monic $u$\footnote{The existence of $u$ is given by the universal property of the push-out.}:

\[\xymatrix{
PB \; \ar@{ >->}[d]^{f} 
\ar@{ >->}[r]^{i}  \ar@{}[dr]|{\text{PO}} 
& B\ar@{ >->}[d]^{l} \ar@{ >->}[ddr]^{g} \\
C \; \ar@{ >->}[drr]^{j} \ar@{>->}[r]^{k} & PO \ar@{ >->}[dr]^{u}
\\ & & D.}
\]
\end{itemize}

\end{definition}

 Now let us define relative notions of intersection and sum:
\begin{definition} \label{intersection & sum}

Let $(X_1,i_1)$, $(X_2,i_2)$ be two $\E$-subobjects of an object $X$. We define their \emph{intersection} $X_1{\cap}_X X_2$, to be the pullback
\[ \begin{tikzcd} X_1{\cap}_X X_2  \arrow[dr, phantom, "\lrcorner", very near end]  \arrow[r, "s_1", tail] \arrow[d, "s_2"', tail] & X_1 \arrow[d, "i_1", tail] \\ X_2 \arrow[r, "i_2"', tail] & X. \end{tikzcd} \] 
We then define their \emph{sum}, $X_1{+}_{X}X_2$, to be the pushout  
\[ \begin{tikzcd} X_1{\cap}_X X_2    \arrow[r, "s_1", tail] \arrow[d, "s_2"', tail] & X_1 \arrow[d, "j_1", tail] \\ X_2 \arrow[r, "j_2"', tail] & \arrow[ul, phantom, "\ulcorner", near end]X_1{+}_{X}X_2. \end{tikzcd}  \]
\end{definition}
\begin{remark}
Assume $(\A,\E)$ is an AIS-category, then these intersection and sum are well-defined and admissible for any two admissible subobjects.
\end{remark}

\begin{remark}   Let  $(X_1,i_1)$, $(X_2,i_2)$ and $(Y,j)$ be $\E$-subobjects of an object $X$. Then
\begin{enumerate}
    \item[i)]$X_1 \cap_X X_1= X_1 = X_1 +_X X_1$.
    \item[ii)]If $X_1{+}_{X}X_2=0_{\A}$ then $X_1=X_2=0_{\A}$.
     
     \end{enumerate}
\end{remark}

\begin{remark} \label{sum intersection as coker ker} Equivalently, for two $\E$-subobjects $(X_1,i_1)$, $(X_2,i_2)$  of an object $X$ we have 
\[
    X_1{\cap}_X X_2  = \Ker \left( \begin{tikzcd}[sep= large]
X_1 \oplus X_2 \arrow[r, "{[ i_1 - i_2] }"] & X \end{tikzcd} \right) \]
and 
\[
    X_1{+}_{X}X_2  = \Coker \left( \begin{tikzcd}[sep=large]
X_1 \cap_X X_2 \arrow[r, "{\left[ s_1  -s_2  \right]^t}"] & X_1 \oplus X_2
\end{tikzcd} \right). \]
Thus, as the direct sum is an associative operation, so are the sum and intersection operations. Moreover, the direct sum is commutative up to isomorphism, and so are the sum and intersection.
\end{remark}

Now let us show how this definition generalises the abelian versions from Definitions \ref{ab sum} and \ref{ab intersection}:
\begin{proposition}\label{intersection in abelian}

Let $(\A,\E_{all})$ be an abelian exact category and let $(X_1,i_1)$ and $(X_2,i_2)$ be two $\E$-subobjects of an object $X$. Then $\Ker t$ %$(\Ker t, k_1, k_2)$ 
forms the pull-back of $(X, i_1, i_2)$, where

\[t = \bsm
d_1 \\
d_2 
\esm: X\rightarrow X/X_1\oplus X/X_2\]  is given by the cokernels $d_1,d_2$ of $i_1, i_2$ as in Definition \ref {ab intersection}.
\end{proposition}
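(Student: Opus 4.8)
The plan is to verify directly that $\Ker t$, equipped with the evident morphisms to $X_1$ and $X_2$, satisfies the universal property of the pullback of $i_1$ and $i_2$ over $X$, and hence agrees with the pullback appearing in Definition \ref{intersection & sum}. Write $k : \Ker t \rightarrowtail X$ for the kernel of $t$. Composing $k$ with the two projections $X/X_1 \oplus X/X_2 \to X/X_i$ gives $d_1 k = 0$ and $d_2 k = 0$.

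First I would produce the two legs of the square. Since $\A$ is abelian, the monic $i_1$ is the kernel of its own cokernel $d_1$, so $d_1 k = 0$ forces $k$ to factor as $k = i_1 s_1$ for a unique $s_1 : \Ker t \to X_1$; dually $k = i_2 s_2$ for a unique $s_2 : \Ker t \to X_2$. Both $s_1$ and $s_2$ are monic because $k$ is, and the square on $\Ker t, X_1, X_2, X$ with edges $s_1, s_2, i_1, i_2$ commutes by construction ($i_1 s_1 = k = i_2 s_2$).

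Next comes the universal property. Let $T$ be an object with morphisms $a : T \to X_1$ and $b : T \to X_2$ satisfying $i_1 a = i_2 b$, and call this common composite $c : T \to X$. Then $d_1 c = d_1 i_1 a = 0$ and $d_2 c = d_2 i_2 b = 0$, so $t c = 0$, and $c$ factors uniquely through the kernel as $c = k h$ with $h : T \to \Ker t$. From $i_1 s_1 h = k h = c = i_1 a$ and $i_1$ monic we get $s_1 h = a$, and likewise $s_2 h = b$. For uniqueness: if $h'$ also satisfies $s_1 h' = a$ and $s_2 h' = b$, then $k h' = i_1 s_1 h' = i_1 a = c = k h$, and $k$ monic yields $h' = h$. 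This exhibits $\Ker t$ as the pullback, as claimed.

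I do not expect a serious obstacle here: the argument is a routine diagram chase valid in any abelian category. The one place where abelianness (rather than mere pre-abelianness) is genuinely used is the factorisation step, which invokes the fact that every monic is the kernel of its cokernel — precisely the feature that fails for a general exact structure and that motivates the (AI) axiom. Alternatively, one can phrase the whole computation in one line by observing that $\Ker t = \Ker d_1 \cap_X \Ker d_2$, together with $\Ker d_1 = X_1$ and $\Ker d_2 = X_2$ via $i_1, i_2$, but the elementary verification above is self-contained and also makes the comparison with Remark \ref{sum intersection as coker ker} transparent.
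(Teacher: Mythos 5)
Your proof is correct and follows essentially the same route as the paper's: show that the kernel inclusion kills $t$, factor it through $X_1$ and $X_2$ using that in an abelian category each monic $i_j$ is the kernel of its cokernel $d_j$, and then verify the pullback universal property via the universal property of $\Ker t$. You are in fact slightly more careful than the paper, which leaves the existence of the two legs implicit and does not spell out uniqueness of the mediating morphism, so no changes are needed.
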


\begin{proof}
Let us consider the following diagram

\[
\begin{tikzcd}
{\Ker t} \arrow[r, "k_1"] \arrow[d, "k_2"'] \arrow[rd, "i"] & {X_1} \arrow[d, tail, "i_1"]                &              \\
{X_2} \arrow[r, tail, "i_2"']                & {X} \arrow[r, two heads, "d_2"] \arrow[d, two heads, "d_1"'] \arrow[rd, "t"] & {X / X_2} \arrow[d, tail] \\
                                  & {X/ X_1} \arrow[r, tail]                      & {X/ X_1 \oplus X/ X_2}          
\end{tikzcd}
\]

where $\ i_1\circ k_1= i$ and $\ i_2\circ k_2= i$.

Assume now one has an object $V$ and two morphisms  $v_1$, $v_2$ such that 
$ i_1\circ v_1 = i_2\circ v_2$:

\[ 
\begin{tikzcd}
V \arrow[drr, "v_1", bend left] \arrow[ddr, "v_2"', bend right] \arrow[dr, dashed, "v"] & & & \\
&{\Ker t} \arrow[r, "k_1"] \arrow[d, "k_2"'] \arrow[rd, "i"] & {X_1} \arrow[d, tail, "i_1"]                &              \\ &
{X_2} \arrow[r, tail, "i_2"']                & {X} \arrow[r, two heads, "d_2"] \arrow[d, two heads, "d_1"'] \arrow[rd, "t"] & {X / X_2} \arrow[d, tail] \\ &
                                  & {X/ X_1} \arrow[r, tail]                      & {X/ X_1 \oplus X/ X_2}          
\end{tikzcd}
\]

Since $t\circ i_1\circ v_1 = \bsm
d_1 \\
d_2 \\
\esm \circ i_1\circ v_1 = \bsm
d_1\circ i_1\circ v_1 \\
d_2\circ i_1\circ v_1 \\
\esm = \bsm
0 \\
d_2\circ i_2\circ v_2 \\
\esm = 0 $,
by the universal property of the kernel there exists a unique morphism $v$ such that 
$i_1 \circ v_1= i\circ v = i_1 \circ k_2 \circ v$. Since $i_1$ is mono, we conclude
$v_1 = k_2\circ v$.

By symmetry we also have that there exists a unique morphism $v$ such that $v_2 = k_1\circ v$.

We conclude that $(\Ker t, k_1, k_2) $ is the pull-back of $(X, i_1, i_2)$.
\end{proof}

\begin{proposition}\label{sum in abelian}
Let $(\A, \E_{all})$ be an abelian exact category and let $(X_1,i_1)$ and $(X_2,i_2)$ be two $\E$-subobjects of an object $X$. Then $\Im s$ forms the push-out of
$(X_1{\cap}_X X_2, s_1, s_2)$ where $s$ is as in Definition \ref{ab sum} and $s_1$ and $s_2$ are given by the pull-back as in Definition \ref{intersection & sum}.
\end{proposition}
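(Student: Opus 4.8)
The plan is to establish this as the formal dual of Proposition \ref{intersection in abelian}. Write $\iota_1 : X_1 \to X_1 \oplus X_2$ and $\iota_2 : X_2 \to X_1 \oplus X_2$ for the canonical inclusions and factor $s = [i_1 \ i_2]$ through its image as $s = m \circ p$, where $p : X_1 \oplus X_2 \twoheadrightarrow \Im s$ is the canonical epic and $m : \Im s \rightarrowtail X$ the canonical monic. Set $j_1 := p \circ \iota_1$ and $j_2 := p \circ \iota_2$, so that $m \circ j_\ell = s \circ \iota_\ell = i_\ell$ for $\ell \in \{1,2\}$. Since $m$ is monic and $i_1 \circ s_1 = i_2 \circ s_2$ by the pullback defining $X_1 \cap_X X_2$ (Definition \ref{intersection & sum}), we get $j_1 \circ s_1 = j_2 \circ s_2$, so the square with corners $X_1 \cap_X X_2$, $X_1$, $X_2$, $\Im s$ and arrows $s_1, s_2, j_1, j_2$ commutes.

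The key step is to identify $\Ker s$. I claim that $\left[ s_1 \ -s_2 \right]^t : X_1 \cap_X X_2 \to X_1 \oplus X_2$ is a kernel of $s$ --- this is exactly Remark \ref{sum intersection as coker ker}. Indeed $s \circ \left[ s_1 \ -s_2 \right]^t = i_1 \circ s_1 - i_2 \circ s_2 = 0$; and if $\left[ a \ b \right]^t : V \to X_1 \oplus X_2$ satisfies $s \circ \left[ a \ b \right]^t = i_1 \circ a + i_2 \circ b = 0$, then $i_1 \circ a = i_2 \circ (-b)$, so the universal property of the pullback $X_1 \cap_X X_2$ yields a unique $v : V \to X_1 \cap_X X_2$ with $s_1 \circ v = a$ and $s_2 \circ v = -b$, that is $\left[ s_1 \ -s_2 \right]^t \circ v = \left[ a \ b \right]^t$; uniqueness of $v$ holds because $s_1$ is monic (being a pullback of the monic $i_2$). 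Consequently, since $\A$ is abelian, $\Coker\!\left( \left[ s_1 \ -s_2 \right]^t \right) = \Coim s = \Im s$, and under this identification the canonical epic out of $X_1 \oplus X_2$ is precisely $p$.

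It remains to check the universal property of the push-out. Let $w_1 : X_1 \to W$ and $w_2 : X_2 \to W$ satisfy $w_1 \circ s_1 = w_2 \circ s_2$. Then $[w_1 \ w_2] \circ \left[ s_1 \ -s_2 \right]^t = w_1 \circ s_1 - w_2 \circ s_2 = 0$, so $[w_1 \ w_2]$ factors through $\Coker\!\left( \left[ s_1 \ -s_2 \right]^t \right) = \Im s$, i.e.\ through $p$: there is a unique $w : \Im s \to W$ with $w \circ p = [w_1 \ w_2]$. Precomposing with $\iota_\ell$ gives $w \circ j_\ell = w_\ell$ for $\ell \in \{1,2\}$, and uniqueness of $w$ follows from $p$ being epic. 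Hence $(\Im s, j_1, j_2)$ is the push-out of $(X_1 \cap_X X_2, s_1, s_2)$, as required by Definition \ref{intersection & sum}.

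As with Proposition \ref{intersection in abelian}, I do not expect a genuine obstacle here; the argument is the Eckmann--Hilton dual of the pullback computation. The two points that need care are the sign bookkeeping in the column $\left[ s_1 \ -s_2 \right]^t$ (so that its composite with $s = [i_1 \ i_2]$ vanishes, matching Definition \ref{ab sum}), and the appeal to the abelian identity $\Coim = \Im$ together with the observation that the canonical epic $p$ attached to $s$ is literally the cokernel of $\Ker s$; this is what makes the push-out equal to $\Im s$ on the nose rather than merely up to isomorphism. If one prefers to avoid the explicit image factorisation, one can instead quote directly that in an abelian category the push-out of $(s_1, s_2)$ is $\Coker\!\left( \left[ s_1 \ -s_2 \right]^t \right)$ with the evident structure maps, and then invoke the kernel identification above.
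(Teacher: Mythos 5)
Your argument is correct and follows essentially the same route as the paper: identify $X_1{\cap}_X X_2$ with $\Ker s$, recognise the push-out of $(s_1,s_2)$ as $\Coker\bigl(\bsm s_1 \\ -s_2\esm\bigr)=\Coim s$, and conclude via $\Coim s\cong \Im s$ in an abelian category. The only difference is that you verify explicitly the kernel identification and the push-out universal property, which the paper takes for granted.
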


\begin{proof}
In the abelian case, the pull-back along $(X, i_1,i_2)$ is the kernel of $[i_1   \;\; i_2]: $
\[ \begin{tikzcd}[ampersand replacement = \&]  \Ker \bsm i_1 & i_2 \esm \arrow[r, "{\bsm s_1 \\ -s_2\esm}"] \& X_1 \oplus X_2 \arrow[r, "{\bsm i_1 & i_2 \esm}"] \arrow[dr, "{\bsm j_1 & j_2\esm}"'] \& X \\ \& \& \Coker \bsm s_1 \\ -s_2 \esm \end{tikzcd} \]
Consider the pull-back diagram defining $(X_1{\cap}_{X} X_2)= \Ker \bsm i_1 & i_2 \esm $
\[ \begin{tikzcd} X_1{\cap}_X X_2   \arrow[dr, phantom, "\lrcorner", near end]  \arrow[r, "s_1", tail] \arrow[d, "s_2"', tail] & X_1 \arrow[d, "i_1", tail] \\ X_2 \arrow[r, "i_2"', tail] & X \end{tikzcd} \]

and the push-out along $( s_1, s_2)$ :

\[ \begin{tikzcd}[row sep = large] X_1{\cap}_X X_2    \arrow[r, "s_1", tail] \arrow[d, "s_2"', tail] & X_1 \arrow[d, "j_1", tail] \\ X_2 \arrow[r, "j_2"', tail] & \arrow[ul, phantom, "\ulcorner", near end] X_1{+}_{X}X_2 = \Coker \bsm s_1 \\ -s_2 \esm . \end{tikzcd}  \]

The push-out $X_1{+}_{X}X_2$ is $\Coker(\Ker(s))=\mbox{Coim}\,(s).$
And since $\Coim(s)\cong \Im(s)$ in an abelian category, we conclude that $\Im(s)$ coincides with the general admissible sum in a nice category.
\end{proof}

\begin{cor}\label{abelian is nice}
Let $\A$ be an abelian category. Then $(\A,\E_{all})$ is an AIS-category.
\end{cor}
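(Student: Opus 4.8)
The plan is to verify the two halves of the (AIS) axiom directly, leaning almost entirely on the two preceding propositions and on standard facts about abelian categories. First I would record the trivial observation that any abelian category $\A$ is in particular pre-abelian, so the hypothesis on the underlying additive category in Definition \ref{nice} is satisfied, and that with the exact structure $\E_{all}$ every monic is an admissible monic and every epic an admissible epic (this is the content of the \emph{abelian $=$ pre-abelian $+$ all morphisms strict} example recalled in the Background). Thus for $\E_{all}$ the distinction between ``monic'' and ``admissible monic'' collapses, and likewise for epics.

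Next I would establish the pull-back half, i.e.\ axiom (AI). Given two monics $j\colon C\rightarrowtail D$ and $g\colon B\rightarrowtail D$, the pull-back exists in $\A$ because $\A$ is abelian (indeed pre-abelian suffices to form it as the kernel of $[\,j\;{-}g\,]\colon C\oplus B\to D$, cf.\ Remark \ref{sum intersection as coker ker}). By Proposition \ref{intersection in abelian} this pull-back is precisely $C\cap_D B=\Ker t$ with $t=\bsm d_C\\ d_B\esm\colon D\to D/C\oplus D/B$, and the two projection maps $i,f$ out of it are the legs $k_1,k_2$ constructed there. It remains to see that $i$ and $f$ are monic; this is the standard fact that pull-backs in an abelian (or any) category preserve monomorphisms: since $j$ is monic, so is its pull-back $i$, and since $g$ is monic, so is its pull-back $f$. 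As every monic in $\E_{all}$ is an admissible monic, (AI) holds.

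For the push-out half, I would feed the pull-back square produced above into Proposition \ref{sum in abelian}. That proposition identifies the push-out of $(C\cap_D B,\ s_1,\ s_2)$ with $\Im s$, where $s=[\,i_C\;\; i_B\,]\colon C\oplus B\to D$; in particular the push-out exists (it is $\Coker(\Ker s)=\Coim s\cong\Im s$). The remaining task is to check that the induced map $u\colon C+_D B=\Im s\to D$ is monic. But $\Im s$ is by definition an image, i.e.\ it comes with a monic inclusion into $D$, and that inclusion is exactly the comparison map $u$ supplied by the universal property of the push-out (this is what the last two lines of the proof of Proposition \ref{sum in abelian} amount to, matching $j_1,j_2$ with the factorisation $C\oplus B\twoheadrightarrow\Coim s\rightarrowtail D$). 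Hence $u$ is monic, therefore an admissible monic for $\E_{all}$, and (AS)---hence (AIS)---is verified, completing the proof.

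I do not expect a genuine obstacle here: the corollary is essentially a bookkeeping consequence of Propositions \ref{intersection in abelian} and \ref{sum in abelian} together with the fact that in an abelian category $\Coim\cong\Im$ and pull-backs of monics are monic. The only point requiring a line of care is confirming that the canonical monic $\Im s\rightarrowtail D$ really is the map $u$ from the push-out's universal property and not merely an abstractly isomorphic one; this follows from the compatibility $i\circ s_k = \text{(leg)}$ already recorded in Proposition \ref{sum in abelian}, so a single sentence citing that proposition suffices.
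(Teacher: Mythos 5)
Your proposal is correct and follows the same route as the paper, which simply cites Propositions \ref{intersection in abelian} and \ref{sum in abelian}; you have merely spelled out the bookkeeping (pull-backs of monics are monic, every monic is admissible in $\E_{all}$, and the comparison map $u$ is the image inclusion $\Im s\rightarrowtail D$) that the paper leaves implicit.
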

\begin{proof}
This follows directly from Propositions \ref{intersection in abelian} and \ref{sum in abelian}.
\end{proof}
\bigskip

\begin{comment}
\begin{definition}\label{semi-nice}
Let $(\A,\E)$ be an exact category, it is said to be \emph{semi-nice} when the following properties are satisfied for any two $\E-$subobjects $(X_1,i_1)$, $(X_2,i_2)$ of an object $X$:
\begin{enumerate}
\item $(X_1{\cap}_X X_2, s_1)$ and $(X_1{\cap}_X X_2, s_2)$ are  $\E$-subobjects\footnote{where the morphisms $s_1$ and $s_2$ are defined as in \ref{intersection & sum}} of $X_1$ and $X_2$ respectively,

\item $(X_1{+}_{X}X_2, u)$ is an $\E$-subobject\footnote{We mean here that the arrow $u$ of \ref{sum u.p} should be admissible.} of $X$,
\end{enumerate}
when these intersections and sums exists.
\end{definition}
\end{comment}
Now we give some properties of the intersection and the sum of $\E-$subobjects of an object:
\begin{lemma}\label{intersection inclusion}
Let $X,Y$ and $Y'$ be $\E-$subobjects of an object $Z$ in an AI-category. If there exists an admissible monic \[ i : Y\mono Y'\] then there exists an admissible monic \[X{\cap}_{Z} Y\mono X{\cap}_{Z} Y'.\]
\end{lemma}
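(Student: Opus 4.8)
The plan is to build the required morphism $X\cap_Z Y \to X\cap_Z Y'$ from the universal property of the pullback that defines $X\cap_Z Y'$, and then to recognise it as an admissible monic by means of B\"uhler's ``obscure axiom'' (Proposition \ref{obscure axiom}). First I would fix the subobject data: write $i_X : X \mono Z$, $i_Y : Y \mono Z$ and $i_{Y'} : Y' \mono Z$ for the structure monics, the given monic $i : Y \mono Y'$ being compatible in the sense that $i_{Y'} \circ i = i_Y$ (this is what it means for $Y$ to be an $\E$-subobject of $Y'$ over $Z$). Axiom (AI) applied to $i_X$ and $i_Y$ gives the intersection $X \cap_Z Y$ as a pullback with admissible monics $s_1 : X\cap_Z Y \mono X$ and $s_2 : X \cap_Z Y \mono Y$; axiom (AI) applied to $i_X$ and $i_{Y'}$ gives $X \cap_Z Y'$ with admissible monics $s_1' : X \cap_Z Y' \mono X$ and $s_2' : X \cap_Z Y' \mono Y'$.

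Next I would produce the comparison morphism. Since
\[ i_X \circ s_1 = i_Y \circ s_2 = i_{Y'} \circ (i \circ s_2), \]
the pair $(s_1,\, i \circ s_2)$ is a cone over $(i_X, i_{Y'})$, so the universal property of the pullback $X \cap_Z Y'$ yields a unique morphism $\varphi : X \cap_Z Y \to X \cap_Z Y'$ with $s_1' \circ \varphi = s_1$ and $s_2' \circ \varphi = i \circ s_2$.

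Finally I would check that $\varphi$ is an admissible monic. The composite $s_2' \circ \varphi = i \circ s_2$ is a composition of the admissible monic $s_2$ (coming from (AI)) with the admissible monic $i$ (the hypothesis), hence is itself an admissible monic by axiom (A1). Because $\A$ is pre-abelian, $\varphi$ admits a cokernel, so Proposition \ref{obscure axiom}, applied to $\varphi$ together with the morphism $s_2' : X\cap_Z Y' \to Y'$ for which $s_2' \circ \varphi$ is an admissible monic, forces $\varphi$ to be an admissible monic; this is the desired monic $X \cap_Z Y \mono X \cap_Z Y'$.

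The only delicate point is the bookkeeping around the compatibility $i_{Y'} \circ i = i_Y$ — without it the statement carries no natural meaning — and choosing the right composite to feed into the obscure axiom (the composite with $s_2'$ rather than $s_1'$, since only the former lands on an admissible monic built from $i$). Once that is fixed, the verification that $\varphi$ exists and that $s_2'\circ\varphi$ is admissible is routine; note in passing that $s_1'\circ\varphi = s_1$ monic already shows $\varphi$ is monic, although this is subsumed by the argument above.
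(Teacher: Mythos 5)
Your proof is correct and follows essentially the same route as the paper's: obtain the comparison morphism $\varphi$ from the universal property of the pullback defining $X\cap_Z Y'$, observe that its composite with the projection to $Y'$ equals the admissible monic $i\circ s_2$, and conclude via the obscure axiom (Proposition \ref{obscure axiom}) using that cokernels exist in a pre-abelian category. Your write-up is in fact slightly more explicit than the paper's about the compatibility $i_{Y'}\circ i = i_Y$ and about why the cokernel of $\varphi$ exists.
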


\begin{proof}By definition we have the two following pull-back diagrams

\[ \xymatrix{ X{\cap}_{Z} Y\ar@{>->}[r]^{f} \ar@{>->}[d]_{g} &  Y\ar@{>->}[d]_{h}\\ X  \ar@{>->}[r]_{k} & Z} \]

and

\[ \xymatrix{ X{\cap}_{Z} Y'\ar@{>->}[r]^{f'} \ar@{>->}[d]_{g'} &  Y'\ar@{>->}[d]_{h'}\\ X  \ar@{>->}[r]_{k} & Z} \]

where $\ h'\circ i= h$.

So we have a monic $\ i\circ f= l$ that commutes the following diagram

\[ \xymatrix{ X{\cap}_{Z} Y\ar@{>->}[r]^{l} \ar@{>->}[d]_{g} &  Y'\ar@{>->}[d]_{h'}\\ X  \ar@{>->}[r]_{k} & Z} \]

By the universal property of the pull-back, there exist a morphism 
 \[ r : X{\cap}_{Z} Y\to X{\cap}_{Z} Y'.\]

such that $\ f'\circ r= l$ and $\ g'\circ r= g$.

Since $l$ is an admissible monic, and the cokernel of $r$ exists, then the obscure axiom \ref{obscure axiom} implies that the morphism $r$ is also an admissible monic.

\end{proof}

\begin{lemma}\label{sum inclusion}
Let $X,Y$ and $Y'$ be $\E-$subobjects of an object $Z$ in an AS-category. If there exists an admissible monic \[ i : Y\mono  Y'\] then there exists an admissible monic \[Y{+}_{Z}X\mono Y'{+}_{Z}X\]
when these sums exists.
\end{lemma}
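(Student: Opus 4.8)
The plan is to mimic the proof of Lemma \ref{intersection inclusion}, but working with push-outs instead of pull-backs and invoking the (AS) axiom in place of (AI). First I would set up the two push-out squares that define the sums in question. By Definition \ref{intersection & sum}, $Y{+}_Z X$ is the push-out of the cospan $Y \hookleftarrow Y\cap_Z X \hookrightarrow X$ along the admissible monics $s_1, s_2$ coming from the pull-back defining $Y\cap_Z X$, and similarly $Y'{+}_Z X$ is the push-out of $Y' \hookleftarrow Y'\cap_Z X \hookrightarrow X$. Since we are in an AS-category — which I will take to also provide the relevant intersections, or at least I would assume the sums in the statement exist as hypothesised — Lemma \ref{intersection inclusion} (or rather its underlying pull-back construction, since an AS-category in the sense used here should be assumed pre-abelian so that intersections exist, otherwise the statement presupposes them) gives an admissible monic $Y\cap_Z X \hookrightarrow Y'\cap_Z X$ compatible with the maps into $X$ and with $i: Y\hookrightarrow Y'$.

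Next I would assemble the comparison morphism $Y{+}_Z X \to Y'{+}_Z X$. Composing the structure map $X \hookrightarrow Y'{+}_Z X$ and the composite $Y \hookrightarrow Y' \hookrightarrow Y'{+}_Z X$, one checks these agree after precomposition with $s_1$ and $s_2$ from $Y\cap_Z X$ (using the compatibility just obtained), so the universal property of the push-out $Y{+}_Z X$ produces a unique morphism $t: Y{+}_Z X \to Y'{+}_Z X$ commuting with all structure maps. Now I want to show $t$ is an admissible monic. The idea is to exhibit a morphism $w$ with $w\circ t$ an admissible monic that admits a cokernel, and then apply Buhler's obscure axiom (Proposition \ref{obscure axiom}). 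The natural candidate for such a $w$ is the structure map from $Y'{+}_Z X$ into $Z$: indeed, since $Y', X$ are $\E$-subobjects of $Z$, both inclusions $Y'\hookrightarrow Z$ and $X\hookrightarrow Z$ factor through the sum, and by the (AS) axiom the resulting map $u': Y'{+}_Z X \hookrightarrow Z$ is an admissible monic. Precomposing with $t$, the map $u'\circ t: Y{+}_Z X \to Z$ is precisely the structure map $u: Y{+}_Z X \to Z$, which by (AS) applied to the first sum is also an admissible monic — hence in particular a composition of admissible monics, but I actually only need that it is itself an admissible monic.

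Then I would finish via Proposition \ref{obscure axiom}: $t$ is a morphism in $\A$, its cokernel exists because $\A$ is pre-abelian (an AS-category is additive with the exact structure; I would note that for this argument one needs cokernels, so I would either assume pre-abelianness here as in the AI case or observe that the cokernel of $t$ exists because $Y{+}_Z X$ and $Y'{+}_Z X$ are subobjects of $Z$), and $u'\circ t = u$ is an admissible monic; therefore $t$ is an admissible monic. This $t$ is the desired admissible monic $Y{+}_Z X \hookrightarrow Y'{+}_Z X$.

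The main obstacle I anticipate is the bookkeeping required to verify that the comparison map $t$ really does satisfy $u'\circ t = u$, i.e.\ that the universal morphism out of the push-out, when composed with the admissible monic $u'$ into $Z$, recovers the canonical map $u$ into $Z$. This is a diagram chase using uniqueness in the push-out universal property: both $u'\circ t$ and $u$ restrict (along the structure maps from $Y$ and from $X$) to the given inclusions into $Z$, so they coincide. A secondary subtlety is making sure the definition of an AS-category is being applied to the correct push-out configuration — one must check that the push-out of $s_1, s_2$ (which are themselves obtained as pull-back legs) is exactly the configuration the (AS) axiom speaks about, with $j = u$ the induced map to $Z$; this matches Definition \ref{AS} once one identifies $i, f$ there with $s_1, s_2$ here and $D$ with $Z$. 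I would also remark, as in the AI case, that the pre-abelian hypothesis (or at least the existence of the relevant cokernel) is what licenses the appeal to the obscure axiom.
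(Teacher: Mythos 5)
Your proposal is correct and follows essentially the same route as the paper: construct the comparison morphism $Y{+}_Z X \to Y'{+}_Z X$ via the universal property of the push-out, observe that composing it with the admissible monic $Y'{+}_Z X \rightarrowtail Z$ supplied by the (AS) axiom recovers the admissible monic $Y{+}_Z X \rightarrowtail Z$, and conclude by Proposition \ref{obscure axiom}. Your version is if anything slightly more careful than the paper's, since you explicitly verify the compatibility on $Y\cap_Z X$ needed to invoke the push-out universal property and flag the cokernel-existence hypothesis required for the obscure axiom.
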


\begin{proof}

By definition we have the two following push-out diagram

\[\xymatrix{
X{\cap}_Z Y \; \ar@{ >->}[d]^{g} 
\ar@{ >->}[r]^{f}  \ar@{}[dr]|{\text{PO}} 
& Y\ar@{ >->}[d]^{d} \ar@{ >->}[ddr]^{l'} \\
X\; \ar@{ >->}[drr]^{e'} \ar@{>->}[r]^{e} & Y{+}_Z X \ar@{ >->}[dr]^{r'}
\\ & &Y'{+}_Z X  }
\]
where $\ d'\circ i= l'$, and by the universal property of the push-out, there exists a unique morphism 
 \[r' : Y{+}_{Z}X\to Y'{+}_{Z}X.\]
 such that $r'\circ e=e'$ and $r'\circ d=d'$.
The unique two admissible monics \[u : Y{+}_{Z}X\mono Z\]
 \[u' : Y'{+}_{Z}X\mono Z\]
such that $\ u'\circ r'= u$ are admissibles by the (AS) axiom, and since $u$ is an admissible monic and the cokernel of $r'$ exists, then the obscure axiom \ref{obscure axiom} implies that the morphism $r'$ is also an admissible monic. 
\begin{comment}

\[ \xymatrix{ X{\cap}_{Z} Y\ar@{>->}[r]^{f} \ar@{>->}[d]_{g} &  Y\ar@{>->}[d]_{d}\\ X  \ar@{>->}[r]_{e} & Y{+}_{Z}X} \]

and

\[ \xymatrix{ X{\cap}_{Z} Y'\ar@{>->}[r]^{f'} \ar@{>->}[d]_{g'} &  Y'\ar@{>->}[d]_{d'}\\ X  \ar@{>->}[r]_{e'} & Y'{+}_{Z}X} \]

So we have a monic $\ d'\circ i= l'$ that commutes the following diagram

\[ \xymatrix{ X{\cap}_{Z} Y\ar@{>->}[r]^{f} \ar@{>->}[d]_{g} &  Y\ar@{>->}[d]_{l'}\\ X  \ar@{>->}[r]_{e'} & Y'{+}_{Z}X} \]
\end{comment}
\end{proof}

\begin{proposition}Let $(X_1,i_1)$, $(X_2,i_2)$ be two $\E$-subobjects of an object $X$ in an AIS-category, then
$X_1 {\cap}_{X}X_2 = X_1 {\cap}_{(X_1{+}_{X}X_2)}X_2$.
\end{proposition}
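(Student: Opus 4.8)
The plan is to exploit the fact that the comparison morphism $u\colon X_1{+}_X X_2 \rightarrowtail X$ supplied by the (AIS) axiom is an admissible monic, hence a monomorphism. First I would unwind how the sum is built: by Definition~\ref{nice} the object $X_1{+}_X X_2$ is the pushout of the intersection pullback, and the admissible monic $u$ it carries satisfies $u\circ j_1 = i_1$ and $u\circ j_2 = i_2$, where $j_1\colon X_1\rightarrowtail X_1{+}_X X_2$ and $j_2\colon X_2\rightarrowtail X_1{+}_X X_2$ are the two legs of the pushout. In particular $(X_1,j_1)$ and $(X_2,j_2)$ are $\E$-subobjects of $X_1{+}_X X_2$, so, since an AIS-category is in particular an AI-category, their intersection $X_1{\cap}_{(X_1{+}_X X_2)} X_2$ is well defined and equal, as an $\E$-subobject, to the pullback of $j_1$ and $j_2$.

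Next I would compare the two pullback cones directly. Because $u$ is a monomorphism, for every object $V$ and every pair of morphisms $a\colon V\to X_1$, $b\colon V\to X_2$ one has
\[
j_1\circ a = j_2\circ b \iff u\circ j_1\circ a = u\circ j_2\circ b \iff i_1\circ a = i_2\circ b .
\]
Thus a cone over the cospan $X_1 \xrightarrow{\,j_1\,} X_1{+}_X X_2 \xleftarrow{\,j_2\,} X_2$ is literally the same datum as a cone over the cospan $X_1 \xrightarrow{\,i_1\,} X \xleftarrow{\,i_2\,} X_2$. Consequently the limits of the two cospans coincide: the pullback defining $X_1{\cap}_{(X_1{+}_X X_2)} X_2$ and the pullback defining $X_1{\cap}_X X_2$ are one and the same object, and the projections $s_1$ and $s_2$ of the latter serve as the projections of the former (here one uses $u\circ j_1\circ s_1 = i_1\circ s_1 = i_2\circ s_2 = u\circ j_2\circ s_2$ together with $u$ monic to see that $(X_1{\cap}_X X_2, s_1, s_2)$ really is a cone over $(j_1,j_2)$). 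This yields the claimed equality of $\E$-subobjects.

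The only point that needs a little care is the bookkeeping of which maps get identified: one must check that the canonical isomorphism between the two pullbacks respects the projections to $X_1$ and to $X_2$, so that the two objects agree not merely abstractly but as subobjects of $X_1$ (equivalently of $X_2$); this is immediate from the universal properties once the equivalence of cones above is established. I do not expect a genuine obstacle here — the statement is in essence the elementary observation that pulling a cospan back along a monomorphism placed at its apex does not change the pullback — so the substantive step is simply extracting from the (AIS) axiom the factorisations $i_1 = u\circ j_1$ and $i_2 = u\circ j_2$ that make the two cospans comparable.
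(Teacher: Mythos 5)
Your proof is correct, but it runs along a different track from the paper's. The paper disposes of the statement in one line by invoking the equivalent characterisations of \cite[Proposition~2.12]{Bu}: in an exact category a pushout square along an admissible monic is automatically bicartesian, so the very square that \emph{defines} the sum, namely the pushout of $s_1,s_2$ with cokernel legs $j_1\colon X_1\rightarrowtail X_1{+}_X X_2$ and $j_2\colon X_2\rightarrowtail X_1{+}_X X_2$, is also a pullback square — and that pullback is by definition $X_1{\cap}_{(X_1{+}_X X_2)}X_2$. You never use the bicartesian property of the sum square; instead you factor $i_1=u\circ j_1$ and $i_2=u\circ j_2$ through the comparison morphism $u\colon X_1{+}_X X_2\rightarrowtail X$ and observe that postcomposing a cospan with a monomorphism at its apex does not change its cones, hence not its pullback. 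Both arguments are complete. Yours is more elementary in that it only needs $u$ to be a monomorphism (any admissible monic is one, being a kernel), together with the small point — which you assert and which follows from axiom (A2) applied to the pushout of the admissible monics $s_1,s_2$ — that $j_1,j_2$ are themselves admissible monics, so that the right-hand intersection is a legitimate instance of Definition~\ref{intersection & sum}. The trade-off: the paper's route exhibits the sum square itself as the desired pullback and is a one-line citation, while yours is self-contained, makes explicit where the (AS) part of the axiom enters (through $u$), and would work verbatim in any setting where the sum merely maps monomorphically into $X$.
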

\begin{proof}
Using the equivalent assertions of \cite[Proposition 2.12]{Bu}.
\end{proof}

\begin{definition}
An additive functor $F:\mathcal{A} \to \mathcal{B}$ is called \emph{exact} if for every kernel-cokernel pair $(i,d)$ in $\A$, we have that $(Fi, Fd)$ is a kernel-cokernel pair in $\mathcal{B}$. An additive functor $F:(\mathcal{A}, \E) \to (\mathcal{B}, {\E}')$ is called \emph{ $\E$-exact} if $F(\E) \subseteq {\E}'$.
\end{definition}

\begin{remark}
In particular, exact functors preserve kernels and cokernels and therefore preserve intersections and sums. 
\end{remark}

%%%%%%%%%%%%%%%%%%%%%%%%%%%%%%%%
\section{ISOMORPHISM THEOREMS}
In this section $(\A,\E)$ is an AIS-category.\\
We will recall the existence of some special \emph{admissible} short exact sequences, which will play an important role in the proof of the Jordan-H\"older property.
\begin{lemma}\label{c3}Let $X$, and $Y'\mono Y''$ be three $\E-$subobjects of an object $Z$.
Then there exists an admissible short exact sequence
\[(Y'{+}_{Z}X)/X\mono (Y''{+}_{Z}X)/X\epi (Y''{+}_Z X)/(Y'{+}_Z X)\]
\end{lemma}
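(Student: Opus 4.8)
The plan is to build the desired admissible short exact sequence by applying the third isomorphism theorem (in its exact-categorical form) to the chain of admissible subobjects $X \subset_\E Y'+_Z X \subset_\E Y''+_Z X$ of $Z$. First I would verify that all the relevant subobjects are indeed admissible: by Lemma \ref{sum inclusion} the admissible monic $Y' \mono Y''$ induces an admissible monic $Y'+_Z X \mono Y''+_Z X$, and by the (AS) axiom each of $Y'+_Z X$ and $Y''+_Z X$ is an admissible subobject of $Z$; in particular the quotients $(Y'+_Z X)/X$, $(Y''+_Z X)/X$ and $(Y''+_Z X)/(Y'+_Z X)$ all exist as cokernels in the sense of Remark \ref{quotient}. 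It also has to be checked that $X$ is an admissible subobject of each sum: this follows since $X \mono Y'+_Z X$ is, up to the identification of the pushout, the pushout of the admissible monic $X\cap_Z Y' \mono Y'$ along $X\cap_Z Y' \mono X$, hence admissible by axiom (A2), and then $X \mono Y''+_Z X$ is the composite with the admissible monic above, admissible by (A1).

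Next, given the nested admissible monics $X \mono Y'+_Z X \mono Y''+_Z X$, I would apply the exact-categorical third isomorphism theorem, which in an AIS-category produces, from admissible subobjects $A \subset_\E B \subset_\E C$, an admissible short exact sequence
\[
B/A \mono C/A \epi C/B.
\]
Concretely, one takes the cokernel sequences $A \mono B \epi B/A$, $A \mono C \epi C/A$ and $B \mono C \epi C/B$, and uses the universal property of cokernels to get the comparison maps $B/A \to C/A$ and $C/A \to C/B$; that the first is an admissible monic and the second an admissible epic, with the one the kernel-cokernel mate of the other, is exactly the content of the relevant isomorphism theorem proved earlier in this section (or, if not yet available, it can be obtained by a direct application of the $3\times 3$ / noise lemma for exact categories, using that cokernels of admissible monics behave well under (A1), (A2)). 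Substituting $A = X$, $B = Y'+_Z X$, $C = Y''+_Z X$ gives precisely the claimed sequence
\[
(Y'+_Z X)/X \mono (Y''+_Z X)/X \epi (Y''+_Z X)/(Y'+_Z X).
\]

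The main obstacle I anticipate is not the final assembly but the bookkeeping in the first step: making sure that the various occurrences of ``$X$'' sitting inside $Y'+_Z X$ and inside $Y''+_Z X$ are compatibly identified, i.e.\ that the admissible monic $X \mono Y''+_Z X$ factors through $X \mono Y'+_Z X$ followed by the induced map $Y'+_Z X \mono Y''+_Z X$, so that the quotients are being formed with respect to a genuine chain of subobjects rather than merely abstractly isomorphic copies. Once this commutativity is pinned down — which amounts to chasing the universal properties of the two pushouts defining the sums, exactly as in the proof of Lemma \ref{sum inclusion} — the rest is a formal consequence of the isomorphism theorem and the obscure axiom (Proposition \ref{obscure axiom}) guaranteeing that the comparison maps are admissible.
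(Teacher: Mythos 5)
Your proposal is correct and follows essentially the same route as the paper: both establish the chain of admissible monics $X \rightarrowtail Y'{+}_Z X \rightarrowtail Y''{+}_Z X$ via Lemma \ref{sum inclusion} and the pushout structure maps, and then invoke the Noether isomorphism for exact categories (B\"uhler, Lemma 3.5) -- your ``third isomorphism theorem'' -- to produce the admissible short exact sequence of quotients. The compatibility of the two copies of $X$ that you flag as the main bookkeeping point is indeed guaranteed by the construction of the comparison map in the proof of Lemma \ref{sum inclusion}, exactly as you suggest.
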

\begin{proof}

The admissible monic that exists by \ref{sum inclusion} fit into the commutative diagram below, where the arrow on the right exists by the universal property of a Cokernel, then by the dual of \cite[Proposition 2.12]{Bu}
the right square is bicartesian, and by (A2) (or by \cite[Proposition 2.15]{Bu}) the morphism \[Y'{+}_Z X/ X \imono{c} Y''{+}_Z X/ X\]
is also an admissible monic.

Since the first two horizontal rows and the middle column are short exact, then by the Noether Isomorphism for exact categories \cite[Lemma 3.5]{Bu} the third columnn is a well defined admissible short exact sequence, and is uniquely determined by the requirement that it makes the diagram commutative. Moreover, the upper right hand square is bicartesian;

\[ \xymatrix{ & & 0 \ar[d]& {\color{blue}0\ar[d]}\\ 0\ar[r] & X\ar@{=}[d] \ar@{>->}[r] & Y'{+}_Z X \ar@{>->}[d] \ar@{>>}[r] & {\color{blue}(Y'{+}_Z X)/X}\ar@{>->}[d] \ar[r]  \ar[d] & 0\\ 0 \ar[r] & X \ar@{>->}[r]  & Y''{+}_Z X \ar@{>>}[r] \ar@{>>}[d] & {\color{blue}(Y''{+}_Z X)/ X} \ar[r] \ar@{>>}[d] & 0 \\  &  & (Y''{+}_Z X)/(Y'{+}_Z X)\ar[r]\ar[d] & {\color{blue}(Y''{+}_Z X)/ (Y'{+}_Z X)} \ar[d]& \\ & & 0& {\color{blue}0} }\]

In particular $(Y''{+}_Z X)/(Y'{+}_Z X)$ is the admissible Cokernel of the admissible monic $c$.
\end{proof}

\begin{comment}
\begin{lemma}({\bf{The second isomorphism theorem}})\label{ppp}
Let $X$ and $Y$ be two subobjects of $Z$,
then there exists an isomorphism
\[Y/(Y\cap X) \simeq (Y{+} X)/X.\]
\end{lemma}
\begin{proof}
{\color{green}to check with the new definition of sum}
The second isomorphism theorem for abelian categories
\cite[Proposition 6.4]{Po}
uses \cite[(5.2), (5.3), (5.6), the dual of (5.6), (5.7),
(6.3)]{Po}
 which all could be adapted and still hold for pre-abelian categories, so the proof is again true in the more general context of pre-abelian categories.
\end{proof}
\end{comment}

\begin{lemma}({\bf{The $\E-$second isomorphism theorem}})\label{parallelo}
Let $X$, and 
$Y'\mono Y''$ be three $\E-$subobjects of an object $Z$. 
The following is an admissible short exact sequence
\[Y'{\cap}_{Z}X\mono Y'\epi (Y'{+}_Z X)/X\]
\end{lemma}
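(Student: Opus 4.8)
The plan is to exhibit the stated sequence as the third column of a $3\times 3$ commutative diagram with admissible rows and columns, mimicking the argument of Lemma \ref{c3} but now pulling back rather than pushing out. Concretely, I would start from the defining pull-back square for $Y'\cap_Z X$, namely
\[
\begin{tikzcd}
Y'{\cap}_{Z} X \arrow[r, "f", tail] \arrow[d, "g"', tail] & Y' \arrow[d, "h", tail] \\
X \arrow[r, "k"', tail] & Z,
\end{tikzcd}
\]
which yields admissible monics $f$ and $g$ by the (AI) axiom (here $h:Y'\rightarrowtail Z$ factors through $Y''$, but for the intersection with $Y'$ that is irrelevant). The morphism $f:Y'{\cap}_Z X\rightarrowtail Y'$ is the admissible monic whose cokernel should be $(Y'{+}_Z X)/X$; so the task reduces to identifying $\Coker(f)$.

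The key step is to set up the commutative diagram
\[
\begin{tikzcd}
0 \arrow[r] & Y'{\cap}_Z X \arrow[r, "f", tail] \arrow[d, "g"', tail] & Y' \arrow[r, two heads] \arrow[d, "j_1", tail] & \Coker f \arrow[r] \arrow[d, dashed] & 0 \\
0 \arrow[r] & X \arrow[r, "k_0"', tail] & Y'{+}_Z X \arrow[r, two heads] & (Y'{+}_Z X)/X \arrow[r] & 0,
\end{tikzcd}
\]
where $j_1:Y'\rightarrowtail Y'{+}_Z X$ and $k_0:X\rightarrowtail Y'{+}_Z X$ are the structural admissible monics into the push-out $Y'{+}_Z X$ (admissible by the (AS) axiom, exactly as used in Lemma \ref{sum inclusion}), and the left square is precisely the push-out square from Definition \ref{intersection & sum}. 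The left square being a push-out of admissible monics, \cite[Proposition 2.12]{Bu} tells us it is bicartesian, hence also a pull-back; this is the crucial input. Now I would apply the Noether isomorphism lemma for exact categories \cite[Lemma 3.5]{Bu} (equivalently, a diagram chase on the $3\times3$ diagram whose first two columns are the bicartesian square above read vertically and whose first two rows are the displayed admissible sequences) to conclude that the induced map $\Coker f \to (Y'{+}_Z X)/X$ is an isomorphism. Therefore the sequence
\[
Y'{\cap}_Z X \imono{f} Y' \epi (Y'{+}_Z X)/X
\]
is admissible, as claimed.

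The main obstacle I anticipate is making precise the identification $\Coker(f)\cong (Y'{+}_Z X)/X$ from the bicartesian square, i.e. verifying that the map induced on cokernels by a bicartesian square of admissible monics is an isomorphism. This is exactly the content of the Noether isomorphism theorem \cite[Lemma 3.5]{Bu} (or can be extracted from \cite[Proposition 2.12]{Bu} together with the snake-type lemma for exact categories), so the cleanest write-up simply invokes that result applied to the push-out square defining $Y'{+}_Z X$; one should just be careful that the hypotheses there require the square to be a push-out along an admissible monic, which holds since $g$ (equivalently $f$) is admissible by (AI). A minor secondary point is to record explicitly that $f$ itself is admissible — but this is immediate from the (AI) axiom applied to the pull-back defining $Y'\cap_Z X$, so no real work is needed there.
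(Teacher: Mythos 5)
Your proposal is correct and follows essentially the same route as the paper: both exhibit the push-out square defining $Y'{+}_Z X$ (whose legs are admissible monics by (AI) and (A2)) and invoke \cite[Proposition 2.12]{Bu} to identify $\Coker(Y'{\cap}_Z X \rightarrowtail Y')$ with $(Y'{+}_Z X)/X$. The only cosmetic difference is that the paper reads this identification directly off Proposition 2.12 rather than routing it through the Noether isomorphism \cite[Lemma 3.5]{Bu}, which is not quite the statement needed at that step.
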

\begin{proof}
We consider the following push-out diagram
\[\xymatrix{
Y'{\cap}_{Z}X \; \ar@{>->}[d]_{f} 
\ar@{ >->}[r]^{g} \ar@{}[dr]|{\text{PO}}  & Y' \ar@{>->}[d]^{f'}\\
X \; \ar@{>->}[r]^{g'} & Y'{+}_Z X }
\]
and by \cite[Proposition 2.12]{Bu} this square is part of the diagram
\[\xymatrix{
Y'{\cap}_{Z}X \; \ar@{>->}[d]_{f} 
\ar@{ >->}[r]^{g}  \ar@{}[dr]|{\text{PO}} 
& Y'\ar@{>->}[d]^{f'} \ar@{>>}[r]^{c} & Y'/(Y'{\cap}_{Z}X)\ar@{=}[d]\\
X \; \ar@{>->}[r]^{g'} &Y'{+}_Z X \ar@{>>}[r]^{c'} & (Y'{+}_Z X)/X.}
\]

\end{proof}

\begin{proposition}\label{the s.e.s}Let $X$, and $Y'\mono Y''$ be three $\E-$subobjects of an object $Z$. There exists an admissible short exact sequence \[ (Y''{\cap}_{Z} L)/(Y'{\cap}_{Z} L)\mono (Y''/Y') \epi (Y''{+}_{Z}X)/(Y'{+}_{Z}X).\]
\end{proposition}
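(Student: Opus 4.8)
Throughout we write $X$ for the subobject denoted $L$ in the statement. The plan is to realise the claimed sequence as the bottom row of a $3\times 3$ diagram built from two copies of the $\E$-second isomorphism theorem.

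First I would apply Lemma \ref{parallelo} twice, to $X$ together with $Y'$ and with $Y''$, obtaining the two admissible short exact sequences
\[ Y'\cap_Z X \rightarrowtail Y' \twoheadrightarrow (Y'+_Z X)/X, \qquad Y''\cap_Z X \rightarrowtail Y'' \twoheadrightarrow (Y''+_Z X)/X. \]
I would then stack the first of these above the second and fill in the vertical arrows: the admissible monic $Y'\cap_Z X \rightarrowtail Y''\cap_Z X$ provided by Lemma \ref{intersection inclusion} applied to the given $Y'\rightarrowtail Y''$ (identifying $Y'\cap_Z X$ with $X\cap_Z Y'$); the admissible monic $Y'\rightarrowtail Y''$ itself; and the admissible monic $c\colon (Y'+_Z X)/X \rightarrowtail (Y''+_Z X)/X$ coming from Lemma \ref{c3}. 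The left-hand square commutes because the map of Lemma \ref{intersection inclusion} is built to be compatible with the pullback projections, and the right-hand square commutes because $c$ is defined, through the universal property of the cokernel, precisely so as to make it commute. Hence the resulting ladder is a morphism of admissible short exact sequences whose three components are all admissible monics.

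Passing to cokernels of the vertical maps produces the three objects $(Y''\cap_Z X)/(Y'\cap_Z X)$, $Y''/Y'$ and $(Y''+_Z X)/(Y'+_Z X)$; the last is identified with $\Coker(c)$ by Lemma \ref{c3}, which exhibits $(Y'+_Z X)/X \overset{c}{\rightarrowtail} (Y''+_Z X)/X \twoheadrightarrow (Y''+_Z X)/(Y'+_Z X)$ as an admissible sequence. It then remains to know that a morphism of admissible short exact sequences all of whose components are admissible monics induces an admissible short exact sequence on cokernels; granting this, the induced bottom row
\[ (Y''\cap_Z X)/(Y'\cap_Z X) \rightarrowtail Y''/Y' \twoheadrightarrow (Y''+_Z X)/(Y'+_Z X) \]
is exactly the assertion.

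The step that needs genuine attention, and the main obstacle, is this last one: the $3\times 3$ (nine) lemma in the exact setting. I would prove it in the same spirit as Lemma \ref{c3} is proved: the two given rows are admissible, the left and middle columns are admissible by construction and the right column by Lemma \ref{c3}, so the induced arrows on cokernels form a kernel-cokernel pair; that the induced monic is admissible follows from \cite[Proposition 2.12]{Bu} together with the obscure axiom (Proposition \ref{obscure axiom}), and the exactness of the completed diagram is the Noether isomorphism \cite[Lemma 3.5]{Bu}. The remaining checks are routine diagram chases.
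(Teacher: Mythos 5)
Your argument is exactly the paper's: two applications of Lemma \ref{parallelo} give the top two rows, the vertical admissible monics come from Lemmas \ref{intersection inclusion} and \ref{c3} (together with the given $Y'\mono Y''$), and the bottom row is then produced by the $3\times 3$-lemma applied to the resulting commutative diagram with exact columns. The one step you single out as the main obstacle is in fact already available off the shelf: the $3\times 3$-lemma for exact categories is \cite[Corollary 3.6]{Bu}, which is precisely what the paper cites, so no independent proof of it is needed.
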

\begin{proof}Consider the commutative diagram below in which the three colomus are admissibles short exact sequences by \ref{c3} and \ref{quotient}. In addition the first two rows are admissibles short exact sequences by \ref{parallelo}, then the 3$\times$3-lemma for exact categories \cite[Corollary 3.6]{Bu} implies the existence of the commutative diagram of admissible short exact sequences
\[ \xymatrix{0\ar[r] & Y'{\cap}_Z X \ar@{>->}[d]_{} \ar@{>->}[r]{}^{ } & Y'\ar@{>->}[d]_{} \ar@{>>}[r] &  (Y'+X)/X\ar@{>->}[d]^{} \ar[r] & 0\\ 0\ar[r] &Y''{\cap}_Z X \ar@{>>}[d]\ar@{>->}[r]_{} & Y'' \ar@{>>}[d] \ar@{>>}[r] & (Y''+_Z X)/X \ar@{>>}[d] \ar[r] & 0 \\ {\color{blue}0}\ar[r] & {\color{blue}(Y'{\cap}_Z X)/(Y'{\cap}_Z X)} \ar@{>->}[r]_{}  & {\color{blue}Y''/Y'} \ar@{>>}[r] & {\color{blue}(Y''+_Z X)/(Y'+_Z X) }\ar[r] & {\color{blue}0}} \] 
and in particular the third row is an admissible short exact sequence.
\end{proof}

\begin{comment}
\begin{theorem} Let 
\[0 \longrightarrow X \; \mono Z \epi  Y \longrightarrow 0\]
be an admissible short exact sequence, then

(a) Z is $\E-$artinian if and only if X and Y are.

(b) Z is $\E-$noetherian if and only if X and Y are.
\end{theorem}
\begin{proof}
The necessity of (a) come from the fact that all subobjects of $X$ is a subobject of $Z$. For the necessity of (b), we show first that the subobject .
The sufficiency come from 5.2 and 5.4
\end{proof}
\begin{cor} Let {$X_1,...,X_n$} be a finite set of objects, then

(a) Each $X_i$ is $\E-$artinian if and only if $\bigoplus \limits_{\underset{}{i=1}}^n X_i$ is $\E-$artinian.

(b) Each $X_i$ is $\E-$noetherian if and only if $\bigoplus \limits_{\underset{}{i=1}}^n X_i$ is $\E-$noetherian.

\end{cor}
\begin{proof}
First of all, in the case n=2, we have the following admissible short exact sequence
\[0 \longrightarrow X_1 \; \mono X_1\oplus X_2 \epi  X_2 \longrightarrow 0\]
Then the necessity of (a) and (b) are true for n=2 by 3.17.
For $n>2$, we consider the induction on this admissible short exact sequence
\[0 \longrightarrow \bigoplus \limits_{\underset{}{i=1}}^{n-1} X_i \; \mono \bigoplus \limits_{\underset{}{i=1}}^n X_i \epi  X_n \longrightarrow 0\]

For the sufficiency, we use the induction on this admissible short exact sequence
\[0 \longrightarrow \bigoplus \limits_{\underset{i\neq j}{i=1}}^{n-1} X_i \; \mono \bigoplus \limits_{\underset{}{i=1}}^n X_i \epi  X_i \longrightarrow 0\]

\end{proof}
\end{comment}

%%%%%%%%%%%%%%%%%%%%%%%%%%%%%%%%%%
\section{THE JORDAN-H\"OLDER PROPERTY}
%%%%%%%%%%%%%%%%%%%%%%%%%%%%%%%%%%
In \cite{Bau}, Baumslag  gives a short proof of the Jordan–Hölder theorem, for \emph{groups}, by intersecting the terms in one subnormal series with those in the other series.\\ 
In this section we write Baumslag proof of the Jordan-H\"older theorem for abelian categories in the language of exact category $(\A, \E)$.\\
We repeat \cite{Bau} steps by using the admissible morphisms of the maximal exact structure of the abelian category. Our proof use only exact category theoretic arguments, in particular the Schur lemma for exact categories \ref{schur}.

\begin{definition}\label{composition series}
An $\mathcal{E}-$composition series for an object $X$ of $\mathcal{A}$ is a sequence 
\begin{eqnarray}\label{chain1} 0=X_0 \; \imono{i_0} X_1 \;\imono{i_1}  \cdots \; \imono{i_{n-2}} \;  X_{n-1}\;\imono{i_{n-1}}\; X_n=X \end{eqnarray}
where all $i_l$ are \emph{proper admissible monics} with $\E-$simple cokernel. 
\end{definition}

\begin{theorem} {\bf (Jordan-H\"older theorem)}\label{JH} Let $(\mathcal{A}, \mathcal{E})$ be an AIS-category. Any two $\mathcal{E}-$composition series for a finite object $X$ of $\mathcal{A}$
\[ 0=X_0 \; \imono{i_0} X_1 \;\imono{i_1}  \cdots \; \imono{i_{m-2}} \;  X_{m-1}\;\imono{i_{m-1}}\; X_m=X \] and \[0=X'_0 \; \imono{i'_0} X'_1 \;\imono{i'_1}  \cdots \; \imono{i'_{n-2}} \;  X'_{n-1}\;\imono{i'
_{n-1}}\; X'_n=X \]

are equivalent, that is, they have the same length and the same composition factors, up to permutation and isomorphism.  
 
\end{theorem}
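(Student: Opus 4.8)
The plan is to follow Baumslag's strategy, which reduces the Jordan--Hölder theorem to the Schreier refinement theorem: given two composition series, one refines each to a common series by intersecting the terms of one with those of the other, and then shows the refinement steps are trivial because the factors are already $\E$-simple. Concretely, for the two series $0=X_0 \subset_\E \cdots \subset_\E X_m = X$ and $0=X'_0 \subset_\E \cdots \subset_\E X'_n = X$, I would insert between each $X_{k-1}$ and $X_k$ the chain of $\E$-subobjects
\[ X_{k-1} \;=\; X_{k-1} +_X (X_k \cap_X X'_0) \;\subset_\E\; X_{k-1} +_X (X_k \cap_X X'_1) \;\subset_\E\; \cdots \;\subset_\E\; X_{k-1} +_X (X_k \cap_X X'_n) \;=\; X_k, \]
using Lemma~\ref{intersection inclusion} and Lemma~\ref{sum inclusion} (valid since $(\A,\E)$ is an AIS-category) to see that each inclusion in this chain is indeed an admissible monic; symmetrically one refines the primed series. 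The key structural input is the Zassenhaus-type isomorphism: Proposition~\ref{the s.e.s}, applied with $Y' = X_{k-1}+_X(X_k\cap_X X'_{l-1})$, $Y'' = X_{k-1}+_X(X_k\cap_X X'_{l})$ inside the ambient object $Z=X_k$ and with $X$ there taken to be $X'_{l-1}\cap_X X_k$ — or more cleanly, by invoking the symmetric butterfly lemma which Proposition~\ref{the s.e.s} packages — gives an isomorphism between the $(k,l)$-factor of the refined first series and the $(l,k)$-factor of the refined second series.

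From there the argument is the classical bookkeeping. First I would note that each of the two refinements has exactly $mn$ (possibly zero) factors, indexed by pairs $(k,l)$, and that the Zassenhaus isomorphism pairs the $(k,l)$-factor of the first refinement with the $(l,k)$-factor of the second; in particular the two refinements have the same multiset of composition factors (allowing zero objects), hence the same number of \emph{nonzero} factors. Next, the original series are already composition series: each factor $X_k/X_{k-1}$ is $\E$-simple, so in the refinement chain interpolating $X_{k-1}$ and $X_k$, the successive factors are the composition factors of the $\E$-simple object $X_k/X_{k-1}$; by the $\E$-Schur lemma (Lemma~\ref{schur}) together with Remark~\ref{zero coker}, an $\E$-simple object admits no proper nonzero admissible subobject, so exactly one of these interpolating inclusions is proper with cokernel $\cong X_k/X_{k-1}$ and all the others are isomorphisms with zero cokernel. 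Thus, after deleting the trivial steps, the refinement of the first series \emph{is} (equivalent to) the first series itself, with factors $X_1/X_0,\dots,X_m/X_{m-1}$; likewise for the second. Combining this with the Zassenhaus pairing yields a bijection between $\{X_k/X_{k-1}\}$ and $\{X'_l/X'_{l-1}\}$ under which corresponding terms are isomorphic, which is exactly the assertion that $m=n$ and the composition factors agree up to permutation and isomorphism.

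A few technical points need care along the way. One must check that the interpolating terms $X_{k-1}+_X(X_k\cap_X X'_l)$ are genuinely $\E$-subobjects of $X_k$ (not merely of $X$): this is where the AIS hypothesis is used twice — $(AI)$ to form $X_k\cap_X X'_l$ as an admissible subobject of $X_k$, and $(AS)$ to form the sum with $X_{k-1}$ as an admissible subobject of $X_k$ — and one should invoke the associativity/commutativity of $\cap_X$ and $+_X$ recorded in Remark~\ref{sum intersection as coker ker}, as well as the identity $X_1\cap_X X_2 = X_1\cap_{(X_1+_X X_2)} X_2$ from the preceding proposition, to identify the relevant factor objects consistently. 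One also needs finiteness of $X$ to know both series terminate and have finite length; this is built into Definition~\ref{composition series} via the chains being finite. I expect the main obstacle to be precisely the Zassenhaus/butterfly step: making sure Proposition~\ref{the s.e.s} is applied with the correct identification of the three nested $\E$-subobjects so that it outputs the \emph{symmetric} isomorphism $\bigl(X_{k-1}+(X_k\cap X'_l)\bigr)/\bigl(X_{k-1}+(X_k\cap X'_{l-1})\bigr) \;\cong\; \bigl(X'_{l-1}+(X'_l\cap X_k)\bigr)/\bigl(X'_{l-1}+(X'_l\cap X_{k-1})\bigr)$, and verifying that all the intersections and sums appearing can be taken inside a common ambient object so that Proposition~\ref{the s.e.s} literally applies; everything else is diagram-chasing with the admissible short exact sequences already established in Lemma~\ref{c3}, Lemma~\ref{parallelo}, and Proposition~\ref{the s.e.s}.
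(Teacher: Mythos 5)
Your strategy (Schreier refinement of both series plus the Zassenhaus butterfly lemma) is genuinely different from the paper's, which follows Baumslag: it inducts on $m$, forms the single chain
\[ 0 \mono X'_1{\cap}_X X_{m-1}\mono \cdots \mono X_{m-1}\mono X'_1{+}_X X_{m-1}\mono\cdots\mono X,\]
uses the $\E$-simplicity of $X/X_{m-1}$ to locate the unique index $k$ where the sums jump from $X_{m-1}$ to $X$, and then applies Proposition \ref{the s.e.s} only in the form
\[ (X'_{l+1}{\cap}_X X_{m-1})/(X'_{l}{\cap}_X X_{m-1})\mono X'_{l+1}/X'_l \epi (X'_{l+1}{+}_X X_{m-1})/(X'_{l}{+}_X X_{m-1}),\]
where the \emph{middle} term is $\E$-simple. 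The $\E$-Schur lemma (Lemma \ref{schur}) then forces exactly one of the two outer terms to vanish; this simultaneously identifies $X_m/X_{m-1}\cong X'_{k+1}/X'_k$ and shows that the intersections $X'_l{\cap}_X X_{m-1}$ form a composition series of $X_{m-1}$ of length $n-1$, closing the induction. No symmetric butterfly isomorphism is ever needed.

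The gap in your proposal is precisely the step you flag as ``the main obstacle'' and then leave unresolved: the isomorphism
\[ \bigl(X_{k-1}+(X_k\cap X'_l)\bigr)/\bigl(X_{k-1}+(X_k\cap X'_{l-1})\bigr)\;\cong\;\bigl(X'_{l-1}+(X'_l\cap X_k)\bigr)/\bigl(X'_{l-1}+(X'_l\cap X_{k-1})\bigr)\]
is not ``packaged'' by Proposition \ref{the s.e.s}. That proposition is one-sided: it takes a single subobject and one nested pair $Y'\rightarrowtail Y''$ and produces a short exact sequence whose middle term is $Y''/Y'$; it never compares two subquotients built from two different nested pairs. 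Deriving Zassenhaus from it would require showing both sides are isomorphic to a common object such as $(X_k\cap X'_l)/\bigl((X_{k-1}\cap X'_l)+(X_k\cap X'_{l-1})\bigr)$, which needs modular-law manipulations of $\cap_X$ and $+_X$ that are established nowhere in the paper and are not automatic here, since sum and intersection are defined by pushout and pullback rather than in a lattice of subobjects. Without that isomorphism, the pairing of the $(k,l)$-factor of one refinement with the $(l,k)$-factor of the other --- the heart of your counting argument --- is unsupported. The surrounding bookkeeping (admissibility of the interpolating terms via $(AI)$ and $(AS)$, and the collapse of the refinement of a composition series back to itself via $\E$-simplicity of the factors) is fine, but the theorem does not follow until the butterfly step is proved; the paper's induction is designed exactly to avoid having to prove it.
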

\begin{proof}
By induction on $m$. If $m=0$, then $X=0$ and $n=0$.
If $m=1$, then $M$ is $\E-$simple: the only $\E-$composition series is $0\mono M$, and so $n=1$.
If $m\gneq 1$, we consider the sequence on $\E-$subobjects of $X$:
\[ 0 \; \imono{} X'_1{\cap}_X X_{m-1} \;\imono{}  \cdots \; \imono{} \;  X'_{n-1}{\cap}_X X_{m-1}\;\imono{}\;X_{m-1}=\] \[ X_{m-1}\; \imono{} X'_1 {+}_X X_{m-1}\;\imono{}  \cdots \; \imono{} \;  X'_{n-1} {+}_X X_{m-1}\;\imono{}\;X.\]
Since the Cokernels $X/X_{m-1}=X_{m}/X_{m-1}$ are $\E-$simples, there exists a unique $0\leqslant k\lneq n$ such that 
\[ X_{m-1}= X'_1 {+}_X X_{m-1}= \cdots X'_k{+}_{X} X_{m-1} {\subsetneq}_{\mathcal{E}} X'_{k+1} {+}_{X} X_{m-1}\cdots  =X'_{n-1} {+}_{X} X_{m-1}=X.\]
By 
\ref{the s.e.s}, there exists for each $0 \leqslant l\lneq n$ an admissible short exact sequence 
\[0\rightarrow (X'_{l+1}{\cap}_{Z} X_{m-1})/(X'_{l}{\cap}_{X} X_{m-1})\mono (X'_{l+1}/X'_{k}) \]
\[\space \epi (X'_{l+1}{+}_{Z}X_{m-1})/(X'_{l}{+}_{X}X_{m-1})\rightarrow 0.\]
In particular the middle term of this sequence is an $\E-$simple object. By the $\E-$Schur lemma \ref{schur}, the admissible monic (respectively the admissible epic) of this sequence is either the zero morphism, or an isomorphism.
For $l=k$, we have 
\[X_{m}/X_{m-1}\backsimeq  (X'_{k+1}{+}_{X}X_{m-1})/(X'_{k}{+}_{X}X_{m-1}) \backsimeq  (X'_{k+1}/X'_{k})\]
and then by \ref{zero coker} we have
$X'_{k+1}{\cap}_Z X_{m-1}\backsimeq X'_k{\cap}_X X_{m-1}$.
While for $l\neq k$ we have 
\[(X'_{l+1}{\cap}_{Z} X_{m-1})/(X'_{l}{\cap}_{X} X_{m-1})\backsimeq (X'_{l+1}/X'_{k})\]
which means that $X'_{l+1}{\cap}_X X_{m-1} \neq X'_l{\cap}_X X_{m-1}$ and $X'_{l+1}{\cap}_X X_{m-1} / X'_l{\cap}_X X_{m-1}$ is an $\E-$simple object. This shows that the sequence 
\[ 0 \; {\subsetneq}_{\mathcal{E}} X'_1{\cap}_X X_{m-1} {\subsetneq}_{\mathcal{E}} \cdots X'_k{\cap}_X X_{m-1}=X'_{k+1}{\cap}_X X_{m-1}\cdots {\subsetneq}_{\mathcal{E}} \;  X'_{n-1}{\cap}_X X_{m-1}{\subsetneq}_{\mathcal{E}} X_{m-1}\]
is a composition series of $X_{m-1}$ of length $n-1$. By the recurrence hypothesis
$m-1=n-1$, and so $m=n$ and there exists a bijection \[\sigma: \{0, 1,..., k-1, k+1, ..., n-1\}\rightarrow \{0, 1, ..., m-1\}\]
such that $X'_{l+1}/X'_l \backsimeq X_{\sigma(k)+1}/X_{\sigma(k)}$ for $l\neq k$, and by taking $\sigma(i)=m-1$.
\end{proof}

\begin{remark}
More generally, for a fixed additive category $\A$, one may choose an exact structure $\E$ on $\A$ from the lattice of exact structures $Ex(\A)$ (introduced in \cite[Section 5]{BHLR} and recentely studied in \cite{FG} and \cite{BBH}) and consider the $\E-$Jordan-H\"older property. Then the exact category $(\A, \E)$ may not necessarly satisfy the $\E-$Jordan-H\"older property (see \cite[Example 6.9]{BHLR}, \cite{E19} and \cite[Examples 5.3, 5.12]{BHT} for counter-examples) and characterisations of Jordan-H\"older exact categories has appeared in both \cite{E19} and in \cite{BHT}.
\end{remark}

%%%%%%%%%%%%%%%%%%%%%%%%%%%%%%%%%%%%%%

%%%%%%%%%%%%%%%%%%%%%%%%%%%%%%%%%%%%%%

\bigskip

{D\'{e}partment de math\'{e}matiques \\ 
 Universit\'{e} de Sherbrooke \\ 
 Sherbrooke, Qu\'{e}bec, J1K 2R1 \\ 
 Canada\\
Souheila.Hassoun@usherbrooke.ca\\
Sunny.Roy@usherbrooke.ca }


\begin{thebibliography}{99}


%\bibitem[AS]{AS}  M.Auslander, \O.Solberg,  \emph{ Relative homology and representation theory. I. Relative homology and homologically finite subcategories}, Comm. Algebra 21 (1993), no. 9, 2995--3031.


\bibitem[Ba06]{Bau}Baumslag, Benjamin (2006), "A simple way of proving the Jordan-Hölder-Schreier theorem", American Mathematical Monthly, 113 (10): 933–935.


\bibitem[Bi34]{Bi}Birkhoff, Garrett (1934), \emph{Transfinite subgroup series}, Bulletin of the American Mathematical Society, 40 (12): 847–850.
\bibitem[BBH]{BBH} R.-L.Baillargeon, Th.Br\"ustle, S.Hassoun, {\em On the Lattice of exact structures,} in preparation.

\bibitem[Br07]{Br} T. Bridgeland, {\em  Stability conditions on triangulated categories}, Ann. of Math. (2), 166(2):317--345, 2007. 


\bibitem[BHLR18]{BHLR} Th.Br\"ustle, S.Hassoun, D.Langford, S.Roy, {\em Reduction of exact structures,}
J. Pure Appl. Algebra 224 (2020), no. 4, 106212, 29 pp, arXiv:1809.0128.
\bibitem[BHT20]{BHT}Th.Br\"ustle, S.Hassoun, A.Tattar, \emph{Intersection, sum and Jordan-Holder property for exact categories},arXiv:.

\bibitem[BST]{BST} Th. Br\"ustle, D. Smith and H. Treffinger,  {\em Stability Conditions and Maximal Green Sequences in Abelian Categories}, arXiv: 1805.04382.


\bibitem[B\"u10]{Bu} T.Bühler, \emph{Exact categories.} Expo. Math. 28 (2010), no. 1, 1--69.


%\bibitem[E16]{E16} H.Enomoto, \emph{Classifying exact categories via Wakamatsu tilting,} Journal of Algebra, Volume 485, 1 September 2017, pages 1-44 .


%\bibitem[E17]{E17} H.Enomoto, {\em Classifications of exact structures and Cohen–Macaulay-finite algebras,} Advances in Mathematics, Volume 335, 7September 2018, pp.838-877.

%\bibitem[E18]{E18} H.Enomoto, \emph{Relations for Grothendick groups and representation-finiteness,} Journal of Algebra, Volume 539, December 2019, pages 152-176.



\bibitem[E19]{E19} H.Enomoto, \emph{The Jordan-H\"older property and Grothendieck monoids of exact categories,}
arXiv:1908.05446.

\bibitem[E20]{E20}H.Enomoto, \emph{Schur's lemma for exact categories implies abelian,} arXiv: 2002.09241.
%\bibitem[E20]{EExtriangulated}H.Enomoto, \emph{Classifying substructures of extriangulated categories via Serre subcategories,} arXiv: 2005.13381.


\bibitem[FG20]{FG}X.Fang, M.Gorsky, \emph{Exact structures and degeneration of Hall algebras}, arXiv: 2005.12130, 2020.
\bibitem[G62]{Gabriel}P.Gabriel, \emph{Des cat\'egories ab\'eliennes} Bull. Soc. math. France, 90, 1962, p.323- 448.

\bibitem[GR92]{GR} P.Gabriel and A.V.Ro\u{\i}ter, \emph{Representations of Finite-dimensional Algebras,} in: Algebra, VIII, Encyclopaedia Mathematical Sciences, vol. 73, Springer, Berlin, 1992 (with a chapter by B. Keller), pp. 1--177. 
\bibitem[HSW20]{HSW} S.Hassoun, A.Shah, S-A.Wegner,
\emph{Examples and non-examples of integral categories }, arXiv:2005.11309, 2020.

%\bibitem[Kr07]{Kr07} H.Krause,\emph{ An axiomatic characterization of the Gabriel-Roiter measure}. Bull. Lond. Math. Soc. 39 (2007), no. 4, 550--558. 

\bibitem[Pa70]{par}B.Pareigis, \emph{Categories and functors}, University of Munich, Germany. Academic press, Newyork.London, 76-117631.

\bibitem[Po73]{Po} N.Popescu, \emph{Abelian categories with applications to rings and modules.} London Mathematical Society Monographs, No. 3. Academic Press, London-New York, 1973. xii+467 pp. 

\bibitem[Qu73]{Qu}
D.Quillen, \emph{Higher algebraic {$K$}-theory. {I}}, Algebraic
  $K$-theory, I: Higher $K$-theories (Proc. Conf., Battelle Memorial Inst.,
  Seattle, Wash., 1972), Springer, Berlin, 1973, pp.~85--147. Lecture Notes in
  Math., Vol. 341.

 
%\bibitem[Ri84]{Ri84} C.M.Ringel,\emph{  Tame algebras and integral quadratic forms.} Lecture Notes in Mathematics, 1099. Springer-Verlag, Berlin, 1984.
  
%\bibitem[Ri05]{Ri05}C.M.Ringel,\emph{The Gabriel-Roiter measure,} Bull. Sci. Math. 129 (2005), 726-748.

%\bibitem[Ri06]{Ri06}C.M.Ringel,\emph{Fundation of the representation theory of artin algebras, using the Gabriel-Roiter Measure,} Proceeding of the Workshop on Representation of Algebras and Related Topics, Quertaro 2004, Contemp. Math. 406 (2006), 105-135.
  
 
 
\bibitem[Rot95]{Rot}J.Rotman, \emph{An introduction to the theory of groups,} 4th ed, Springer-Verlag, 1995.
   
\bibitem[Ru97]{Ru} A. Rudakov. Stability for an abelian category. Journal of Algebra, 197:231–245, 1997.  
  
  
%\bibitem[Ru01]{Ru01}W.Rump, \emph{Almost abelian categories,} Cahiers Topologie G\'eom. Diff\'erentielle Cat\'eg. 42 (2001), no. 3, 163--225. 


%\bibitem[Ru15]{Ru15}W.Rump, \emph{Stable short exact sequences and the maximal exact structure of an additive category}, Fund. Math. 228 (2015), 87-96.




 
\end{thebibliography}
\end{document}